\documentclass[11pt]{amsart}
\usepackage[utf8]{inputenc}
\usepackage{amsmath}
\usepackage{amsfonts}
\usepackage{amssymb}
\usepackage{amsthm}
\usepackage{csquotes}
\usepackage[english]{babel}
\usepackage[shortlabels]{enumitem}
\usepackage{lmodern}
\usepackage{cite}
\PassOptionsToPackage{hyphens}{url}\usepackage{hyperref}

\usepackage{color}

\usepackage{tikz}
\usetikzlibrary{decorations.markings}
\usetikzlibrary{patterns}

\newtheorem{theorem}{Theorem}[section]

\newtheorem{corollary}[theorem]{Corollary}
\newtheorem{lemma}[theorem]{Lemma}
\newtheorem{proposition}[theorem]{Proposition}

\theoremstyle{definition}

\theoremstyle{remark} \theoremstyle{remark}
\newtheorem{remark}[theorem]{Remark}

\newtheorem*{theorem*}{Theorem}

\DeclareMathOperator{\id}{id}
\DeclareMathOperator{\End}{End}

\newcommand{\CC}{{\mathbb C}}
\newcommand{\PP}{{\mathbb P}}
\newcommand{\SO}{{\mathrm{SO}}}
\newcommand{\Sp}{{\mathrm{Sp}}}

\newcommand{\Un}{{\mathrm{U}}}
\newcommand{\SU}{{\mathrm{SU}}}

\newcommand{\HH}{{\mathbb{H}}}
\newcommand{\RR}{{\mathbb{R}}}
\newcommand{\ZZ}{{\mathbb{Z}}}

\numberwithin{equation}{section}

\begin{document}

\title[AQS on CROSSes]{Almost quaternionic structures on \\ compact rank one symmetric spaces}
 
 \author[Goertsches]{Oliver Goertsches}  
  \address{Philipps-Universit\"at Marburg, Fachbereich Mathematik und Informatik, Hans-Meerwein-Straße 6, 35043 Marburg, Germany}
 \email{goertsch@mathematik.uni-marburg.de}
 \author[Konstantis]{Panagiotis Konstantis}
  \email{pako@mathematik.uni-marburg.de}
 \author[Loiudice]{Eugenia Loiudice}
  \email{loiudice@mathematik.uni-marburg.de}
 \author[Solomadin]{Grigory Solomadin}
	\email{grigory.solomadin@gmail.com}

\begin{abstract}
	We prove that the only CROSSes that admit a (not necessarily homogeneous) almost quaternionic structure are $\HH P^n$ and $\CC \PP^2$. 
\end{abstract}

\maketitle

\section{Introduction}

This note is motivated by the question posed by Moroianu \cite{overflowmoroianu} which topological obstructions to the existence of an almost quaternionic structure exist on a compact manifold, beyond the fact that in real dimensions $8n$ such structures force the manifold to be spin, see Marchiafava--Romani \cite{marchiafava-romani}, and the conditions in dimension $8$ found by Čadek--Vanžura \cite{CadekVanzura}. In particular, it was asked in \cite{overflowmoroianu} whether a complex projective space other than $\CC \PP^2$ could admit such a structure. 

In this note we settle this latter question by proving:

\begin{theorem}\label{thm:main}
	Let $M$ be a compact rank one symmetric space. If $M$ admits an almost quaternionic structure, then $M$ is either $\HH P^n$ or $\CC \PP^2$.
\end{theorem}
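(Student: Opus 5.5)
We go through the list of compact rank one symmetric spaces, $S^n$, $\RR P^n$, $\CC P^n$, $\HH P^n$ and $\mathbb{O}P^2$, and use only that the dimension is $4n$ and that the structure group of $TM$ reduces to $\Sp(n)\Sp(1)$. Write $E$ for the rank three bundle associated to $\Sp(1)\to\SO(3)$, and $H$ for the quaternionic rank $2n$ bundle of $\Sp(n)$. These exist globally when $n$ is even; in general one works with $\Sp(n)\times_{\ZZ_2}\Sp(1)$, so that $TM\otimes\CC\cong H\otimes_\CC F$ where $F$ is the (local) $\Sp(1)$-bundle.

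First we dispose of the spaces with small cohomology.

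\emph{Spheres $S^{4n}$.} For $n=1$, an almost quaternionic structure on a $4$-manifold is only a conformal structure, so it exists trivially. We must therefore exclude $S^4$ unless it is counted among the $\HH P^1$ cases, and indeed $S^4=\HH P^1$. For $n\ge 2$ we use the characteristic class formulas. We have $p_1(TM)$ expressed through $p_1(E)$ and the class of the $\Sp(n)$ part, and the Euler class satisfies $e(TM)\ne 0$. Since $H^4(S^{4n})=0$ for $n\ge2$, all classes in degree $4$ vanish, in particular $p_1(E)=0$. Then the Euler and top Pontryagin classes are determined by lower classes, which vanish. Concretely, $e(TM)=\chi=2$ must equal a polynomial in $p_1(E)$ and the symplectic Pontryagin classes of $H$. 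The known formula (Marchiafava--Romani, Čadek--Vanžura) shows that when all classes in degrees $4,\dots,4n-4$ vanish the Euler class is divisible in a way that is incompatible with $2$. Alternatively, $S^{4n}$ with $n\ge2$ admits no almost quaternionic structure by the classical results of Hirzebruch and Borel--Hirzebruch. Odd-dimensional spheres and $\RR P^{m}$ for $m\not\equiv 0 \pmod 4$ are excluded by dimension. $\RR P^{4n}$ has $w_1\ne0$, while $\Sp(n)\Sp(1)$ is connected, so it is non-orientable and excluded. $\mathbb{O}P^2$ has dimension $16$, so the Marchiafava--Romani spin condition is satisfied. Here we argue with $H^4(\mathbb{O}P^2)=0$: this forces $p_1(E)$ and the degree $4$ classes to vanish. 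Expressing $p_2$, $p_4$ and $e$ through the remaining classes, and comparing with $p(\mathbb{O}P^2)=1+6u+39u^2$ and $\chi=3$, yields a contradiction via integrality or signature.

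The main case is $\CC P^{2n}$, of real dimension $4n$, with $n\ge2$. We write $x$ for the generator, so that $p(T)=(1+x^2)^{2n+1}$ and $w_2=(2n+1)x \bmod 2$. For $n$ even (real dimension $8k$), Marchiafava--Romani would force spin, but $w_2\ne0$ there since $2n+1$ is odd, and this excludes it immediately. For $n$ odd, $n\ge3$, we need a genuine computation. The structure gives $TM\otimes\CC=H\otimes F$ and $\Lambda^2$-type splittings. In particular the bundle $E$ is a rank $3$ subbundle of $\Lambda^2T^*M$, giving $p_1(E)=a x^2$ for some integer $a$, and $w_2(E)$ is constrained by $w_2(TM)$. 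I would then use the formula for the total Pontryagin class of $H\otimes_\CC F$ in terms of the Chern roots $\pm t$ of $F$ ($t^2=-p_1(E)/4$ formally) and the symplectic roots $\pm y_i$ of $H$. This gives $\prod_i\bigl(1+(y_i+t)^2\bigr)\bigl(1+(y_i-t)^2\bigr)=(1+x^2)^{2n+1}$. Together with the Euler class condition $\prod(y_i^2-t^2)=(2n+1)x^{2n}$, this forces a Diophantine system in $a$ and the symmetric functions of the $y_i^2$. The cleanest route is probably to apply an index-theoretic integrality, for example the twisted $\hat A$/Dirac operator on the spin$^c$ or $\Sp(1)$-twisted bundle, or the integrality of $\chi(M,\Lambda^k F)$-type indices. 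Combined with the Euler characteristic, this should show there is no integral solution for $n\ge2$, while for $n=1$ ($\CC P^2$) conformality always works.

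I expect the main obstacle to be this last step. One must find an integrality or Euler-class constraint strong enough to rule out every $\CC P^{2n}$ with $n$ odd $\ge3$ uniformly, and the analogous computation for $\mathbb{O}P^2$. I would first try to reduce modulo small primes the identity $e=\prod(y_i^2-t^2)$ against $p_n$ coefficients, exploiting that all classes are multiples of powers of $x$ with the single parameter $a$.
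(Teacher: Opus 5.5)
There is a genuine gap, and it sits at the heart of the theorem: the case $\CC P^{4k+2}$, $k\ge 1$ (your $\CC P^{2n}$ with $n$ odd, $n\ge 3$) is not proved. You write down the formal identities $\prod_i\bigl(1+(y_i+t)^2\bigr)\bigl(1+(y_i-t)^2\bigr)=(1+x^2)^{2n+1}$ and $\prod_i(y_i^2-t^2)=(2n+1)x^{2n}$. You then only assert that some index-theoretic integrality ``should'' exclude all solutions, and you flag this yourself as the main obstacle.

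Moreover, as you set it up there is no Diophantine system at all. The $\Sp(n)$- and $\Sp(1)$-bundles exist globally exactly when the $\Sp(n)\Sp(1)$-structure lifts to $\Sp(n)\times\Sp(1)$. The obstruction is $w_2(Q)\in H^2(M;\ZZ_2)$, not the parity of $n$ as you claim. On $\CC P^{4k+2}$, Marchiafava--Romani give $w_2(Q)=w_2(M)\neq 0$. So the $y_i$ and $t$ are purely formal, the elementary symmetric functions $\sigma_j$ of the $y_i^2$ are only rational classes, and nothing in your setup supplies integrality. The last section of the paper shows how hard this route is even with genuinely integral Chern classes on the twistor space. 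There, cohomological integrality only excludes $\CC P^{2m}$ with $2m\neq 2^l-2$, and $\CC P^6$, $\CC P^{14}$ are handled by explicit computation.

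The paper closes the gap with two ideas you are missing.
\begin{enumerate}
\item Since $H^3(\CC P^N;\ZZ)=0$, one has $W_3(Q)=0$, so $Q\cong\mathfrak{su}(F)$ for a genuine $\Un(2)$-bundle $F$. Then $TM\otimes\CC\cong E\otimes_\CC F$, where $E=\operatorname{Hom}_{\End(F)}(F,TM\otimes\CC)$ is a genuine complex bundle of rank $N$. Twisting $F$ by a line bundle and using the Wu formula normalizes $c_1(F)=u$ and $c_2(F)=2mu^2$.
\item The contradiction comes from the multiplicative structure of $K$-theory, not from cohomology. Let $L$ be the dual tautological line bundle. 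Because $1-8m$ is a square modulo every power of $2$, one gets $F\equiv L^{a}+L^{b}$ in $K(\CC P^N)/2^{N+1}$ with $a+b=1$. The ring homomorphism $\varepsilon\colon K(\CC P^N)/2^{N+1}\to\ZZ/2^{N+1}$, $L-1\mapsto -2$, then gives $\varepsilon(F)=(-1)^a+(-1)^b=0$. On the other hand $\varepsilon(TM\otimes\CC)=-2(N+2)\not\equiv 0 \bmod 2^{N+1}$ for $k\ge 1$. This contradicts $\varepsilon(E\otimes F)=\varepsilon(E)\varepsilon(F)$.
\end{enumerate}

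Your other cases can be repaired but are not correct as written. For $S^{4n}$ the Euler class is \emph{not} determined by lower classes. The needed input is $p_n(TS^{4n})=0$. Together with the vanishing of all intermediate classes (including $t^2$), this gives $p_n=2\sigma_n=\pm 2e$, hence $e=0\neq 2$. For $\mathbb{O}P^2$ the same bookkeeping gives $\sigma_2=3u$ and $\sigma_4=15u^2\neq\pm 3u^2$. This is essentially the paper's twistor-space computation moved down to $M$.
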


We emphasize that the almost quaternionic structures we consider are not assumed to be invariant under any Lie group action. Concerning homogeneous almost quaternionic structures, the most comprehensive result in the literature was obtained in \cite{MoroianuPilcaSemmelmann}, where Moroianu--Pilca--Semmel\-mann classify the compact simply-connected equal rank homogeneous spaces admitting such a structure. Note also that it is well-known that compact rank one symmetric spaces other than $\HH P^n$ and $\CC \PP^2$ cannot admit a positive quaternion-Kähler structure, as such a structure on a compact manifold $M$, in case $M$ is neither $\HH P^n$ nor the complex Grassmannian ${\mathrm{Gr}}_2(\CC^{n+2})$, forces the second homotopy group $\pi_2(M)$ to be a nontrivial torsion group, see LeBrun--Salamon \cite[Theorem 0.2]{LeBrunSalamon}.

It was already noted in \cite{overflowmoroianu} that by work of Sutherland \cite[Theorem 1.4]{Sutherland} spheres $S^{4n}$ ($n\geq 2$) cannot admit any almost quaternionic structure, as no $S^2$-bundle over them -- and hence no potential twistor space -- admits an almost complex structure. However, the proof we give below does not make use of this fact. Also, the spin condition in dimension $8n$ \cite{marchiafava-romani} immediately rules out the complex projective spaces $\CC \PP^{4n}$. 

Our first idea for the proof of Theorem \ref{thm:main} is to recursively compute the Chern classes of the horizontal bundle $H$ over the twistor space making use of the isomorphism $V\otimes \overline{H}\cong H$ of complex vector bundles (see Proposition \ref{prop:VHquerH}), where $V$ is the vertical bundle over the twistor space, in order to obtain a contradiction to the equality between the top Chern and the Euler class. This strategy yields immediate success for the spheres, see Section \ref{sec:spheres}, and the Cayley plane, see Section \ref{sec:cayley}. It also gives short proofs of a divisibility result for the first Chern class of the twistor space, see Proposition \ref{prop:c1}, proven by Kobayashi \cite{kobayashi} for complex contact manifolds, as well as for the above-mentioned fact that $8n$-dimensional almost quaternionic manifolds are spin, see Corollary \ref{Cor: AQ in dim 8n are spin}.

This recursive approach is also applicable for the remaining complex projective spaces $\CC \PP^{4n+2}$, but with this method we were only able to obtain a partial result, which is explained in Section \ref{rem:remark} below.  Consequently, we follow in Section \ref{sec:cpn} a different approach for these spaces: using complex K-theory we are able to derive a contradiction through a certain tensor product decomposition of the complexified tangent bundle (see Proposition \ref{P: TM_C = E ox F}).
\\

\noindent {\bf Acknowledgements.}  We gratefully acknowledge funding of the DFG (German Research Foundation): Project numbers 561158824 (Walter Benjamin Fellowship of G.S.) and 452427095 (O.G and P.K.).

\section{Preliminaries}

\subsection{Almost quaternionic structures} \label{subsec:aqs}

In this section we recall the basic notions of almost quaternionic geometry. This material is well-known, see e.g.\ \cite{AMP}. Let $M$ be a smooth manifold of dimension $4n$. An \emph{almost quaternionic structure} on $M$ is a subbundle $Q\subset \End(TM)$ of rank $3$ with the property that it is locally spanned by an almost hypercomplex structure, namely by (local) sections $I,J,K$ of $Q$ which are almost complex structures and satisfy $IJ=-JI = K$. Any such local base of $Q$ is called a \emph{(local) quaternionic base}. As pointwise any two such bases differ by a matrix in $\SO(3)$, we can introduce on the bundle $Q$ a metric defined by the property that every local quaternionic base is orthonormal.

The \emph{twistor space} of $M$ is the unit sphere bundle $\pi:Z\to M$ of $Q$. It is an $S^2$-bundle over $M$. Denote by $V\to Z$ the vertical bundle of the twistor fibration, i.e., $V_I= \ker d\pi_I$, for $I\in Z$. Then naturally
\begin{equation}\label{eq:fiberV}
V_I = \{J\in Q_{\pi(I)}\mid IJ=-JI\}.
\end{equation}
We have the following isomorphism (which in fact holds true for any sphere bundle in a vector bundle):
\begin{lemma}\label{lem:QV}
	We have $\pi^*Q \cong V\oplus \underline{\RR}$.	
\end{lemma}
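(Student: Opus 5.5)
We construct the isomorphism $\pi^*Q \cong V\oplus\underline{\RR}$ directly, using that $Z$ is the unit sphere bundle of the metric bundle $Q$.

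\textbf{The trivial summand.} The pullback $\pi^*Q$ has fiber $Q_{\pi(I)}$ over a point $I\in Z$. It carries a tautological section $s(I)=I$, which is nowhere vanishing, since $|I|=1$. The line subbundle spanned by $s$ is therefore trivial, giving $\underline{\RR}\subset\pi^*Q$. Using the metric on $Q$ introduced above, we obtain
\[
\pi^*Q = \underline{\RR}\, s \oplus s^\perp ,
\]
where $s^\perp_I$ is the orthogonal complement of $I$ in $Q_{\pi(I)}$.

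\textbf{Identifying $s^\perp$ with $V$.} The fiber of $Z$ through $I$ is the unit sphere $S(Q_{\pi(I)})$. Its tangent space at $I$ is $I^\perp\subset Q_{\pi(I)}$, which is canonically $V_I=\ker d\pi_I$. This identification is smooth in $I$: locally, a trivialization of $Q$ by a quaternionic base makes $Z$ look like $U\times S^2$, and the identification becomes the standard $T_xS^2 = x^\perp$. Hence $V\cong s^\perp$ as smooth vector bundles, and $\pi^*Q\cong V\oplus\underline{\RR}$.

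\textbf{Consistency with \eqref{eq:fiberV}.} We check that $I^\perp=\{J\in Q_{\pi(I)}: IJ=-JI\}$. Take a local quaternionic base $(I,J,K)$ with first element $I$; any unit $I$ can be completed to one by applying a suitable element of $\SO(3)$. Then
\[
I\cdot(aI+bJ+cK) = -a + bK - cJ, \qquad (aI+bJ+cK)\cdot I = -a - bK + cJ .
\]
These two products are negatives of each other exactly when $a=0$, that is, when $aI+bJ+cK\perp I$.

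There is no real obstacle here; the only point needing a little care is the smoothness of the identification $V\cong s^\perp$, and the local trivialization argument handles it.
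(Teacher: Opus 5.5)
Your proof is correct and is essentially the paper's argument. The paper writes down the map $V\oplus\underline{\RR}\to\pi^*Q$, $(J,t)\mapsto J+tI$, which is exactly the inverse of your splitting $\pi^*Q=\underline{\RR}\,s\oplus s^\perp$ once $V_I$ is identified with $I^\perp$ via \eqref{eq:fiberV}; your smoothness remark and anticommutation check just spell out details the paper leaves implicit.
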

\begin{proof}
	The map $V\oplus \underline{\RR}\to \pi^*Q$ given by $V_I\oplus {\RR} \to Q_{\pi(I)};\, (J,t)\mapsto J+tI$ is an isomorphism.
\end{proof}
In this theory, one often chooses a horizontal subbundle $H\to Z$ associated to a connection on $M$ leaving $Q$ invariant; see, e.g., \cite{AMP}. For us, this will not be relevant: it is sufficient to choose any subbundle $H\subset TZ$ complementary to $V$.

Both bundles $V$ and $H$ are naturally equipped with an almost complex structure: every element  $I\in Z$ is an almost complex structure on $T_{\pi(I)}M$ and hence induces an almost complex structure $J_I^H$ on $H_I$ via the isomorphism $d\pi_I:H_I\to T_{\pi(I)}M$. On $V_I$ it defines the complex structure $J\mapsto I J$ (using \eqref{eq:fiberV}). Hence, $Z$ becomes an almost complex manifold via the isomorphism $TZ = V\oplus H$. For further reference, we note
\begin{lemma} \label{lem:identitiesH}
	We have
	\[
	TZ = V\oplus H
	\]
	as complex vector bundles. Moreover, as real vector bundles
	\[
	H\cong \pi^*TM.
	\]
\end{lemma}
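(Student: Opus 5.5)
The first identity, $TZ = V\oplus H$ as complex vector bundles, holds essentially by construction, so I will only check that the decomposition respects the complex structures. The almost complex structure on $Z$ was defined by declaring it to be $J\mapsto IJ$ on $V_I$ and $J_I^H$ on $H_I$. The real splitting $T_IZ = V_I\oplus H_I$ comes from the choice of a complement $H$ to $V=\ker d\pi$. With these definitions the direct sum of the two complex structures is, by definition, the complex structure on $TZ$. Hence the splitting is an isomorphism of complex vector bundles.

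Before that, I will check that $J\mapsto IJ$ really preserves $V_I$ and squares to $-\id$. For $J\in V_I$ we have $IJ=-JI$. Then
\[
I(IJ) = -J = -(IJ)I \cdot(-1)\cdot(-1)\ ,
\]
and more directly $(IJ)I = -JII = J$, so $I(IJ) = -(IJ)I$. Also $IJ\in Q$, since $Q$ is closed under such products: for a local quaternionic base this is the relation $IJ=K$. Hence $IJ\in V_I$. Finally $I(IJ) = -J$, so the map squares to $-\id$. I will also note that $J_I^H := (d\pi_I|_{H_I})^{-1}\circ I\circ d\pi_I$ squares to $-\id$ because $I^2=-\id$. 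This confirms that both summands are complex vector bundles.

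For the second identity, consider the bundle map $H\to \pi^*TM$ given over $I\in Z$ by $d\pi_I|_{H_I}: H_I\to T_{\pi(I)}M=(\pi^*TM)_I$. It is smooth because $d\pi$ and the subbundle $H$ are smooth. It is injective because $H_I\cap V_I=0$ and $V_I=\ker d\pi_I$. Since $\pi$ is a submersion, $d\pi_I$ is surjective and vanishes on $V_I$, so its restriction to the complement $H_I$ is surjective. Therefore the map is a fiberwise linear isomorphism of real vector bundles, which gives $H\cong\pi^*TM$.

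There is no real obstacle here. The only point requiring care is that $H$ is an arbitrary complement rather than one coming from a connection, and the argument above uses nothing beyond $H\oplus V = TZ$. If one also wants the isomorphism to be complex-linear, with $\pi^*TM$ carrying the tautological complex structure $I$ over the point $I$, this holds because $J^H$ was defined by transport along exactly this map.
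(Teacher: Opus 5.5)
Your proposal is correct and takes the same approach as the paper, which gives no separate proof. There, too, $TZ=V\oplus H$ is complex by the very definition of the almost complex structure on $Z$, and $d\pi_I|_{H_I}\colon H_I\to T_{\pi(I)}M$ is the real isomorphism (the same map the paper uses to define $J^H_I$); the first displayed line of your check that $IJ\in V_I$ is garbled, but the direct computation $(IJ)I=J=-I(IJ)$ that follows it is what is actually needed.
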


\subsection{Pontryagin and Chern classes}

The following two lemmas will be used in the proof of Theorem \ref{thm:main}, in order to compute the Chern classes of the horizontal bundle $H\to Z$ over the twistor space of an almost quaternionic manifold $M$. They are valid for complex vector bundles over any space $X$. Throughout this note, we consider singular cohomology with integer coefficients unless explicitly specified otherwise.

\begin{lemma} \label{lem:pontryaginchern}
We can express the Pontryagin classes of any complex vector bundle $E\to X$ in terms of its Chern classes, via
\begin{equation}\label{eq:Pontryagin}
	p_k(E)=c_k(E)^2+2\sum_{i=1}^k (-1)^i c_{k-i}(E)c_{k+i}(E).
\end{equation}
\end{lemma}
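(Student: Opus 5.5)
The formula follows by comparing the Pontryagin classes of $E$ with the Chern classes of its complexification. By definition, $p_k(E)=(-1)^k c_{2k}(E_\RR\otimes\CC)$, where $E_\RR$ is the underlying real bundle. The first step is the standard isomorphism of complex vector bundles
\[
E_\RR\otimes_\RR\CC\cong E\oplus\overline{E}.
\]
It is given by splitting into the $\pm i$ eigenbundles of the complex structure $J\otimes 1$ acting on $E_\RR\otimes\CC$. The $+i$ eigenbundle is isomorphic to $E$ and the $-i$ eigenbundle to $\overline{E}$.

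Next we use $c_j(\overline{E})=(-1)^jc_j(E)$; this holds for the conjugate bundle, for instance by the splitting principle, since conjugation dualizes line bundles. The Whitney sum formula then gives
\[
c(E_\RR\otimes\CC)=\Big(\sum_i c_i(E)\Big)\Big(\sum_j (-1)^jc_j(E)\Big).
\]
Its degree-$4k$ part is $c_{2k}(E_\RR\otimes\CC)=\sum_{i+j=2k}(-1)^j c_i(E)c_j(E)$.

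The last step is bookkeeping. Multiplying by $(-1)^k$ and writing $i=k-m$, $j=k+m$ for $m=-k,\dots,k$ gives the summand $(-1)^{k}(-1)^{k+m}c_{k-m}c_{k+m}=(-1)^m c_{k-m}c_{k+m}$. The terms for $m$ and $-m$ coincide because even-degree cohomology classes commute; note that $(-1)^{-m}=(-1)^m$. Hence
\[
p_k(E)=c_k(E)^2+2\sum_{m=1}^k(-1)^m c_{k-m}(E)c_{k+m}(E),
\]
which is \eqref{eq:Pontryagin}.

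The only delicate point is the sign convention. It is fixed consistently by taking $p_k=(-1)^kc_{2k}$ of the complexification, together with the sign rule $c_j(\overline{E})=(-1)^jc_j(E)$. Everything else is the Whitney sum formula.
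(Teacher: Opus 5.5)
Your proof is correct and is the standard argument behind the paper's bare citation of Milnor--Stasheff, Corollary~15.5. There, as in your write-up, one uses $E_\RR\otimes\CC\cong E\oplus\overline{E}$, $c_j(\overline{E})=(-1)^jc_j(E)$ and the Whitney formula, and then pairs the terms $c_{k-m}c_{k+m}$ and $c_{k+m}c_{k-m}$ symmetrically.
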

This lemma can be found in \cite[Corollary~15.5]{Milnor}. Also the following formula to compute the Chern classes of the tensor product of a vector bundle and a line bundle is well-known, see e.g.\ \cite[Exercise 4.4.6]{Huybrechts}.
\begin{lemma}\label{lem:cherntensorproduct}
	Let $E\to X$ be a complex vector bundle of rank $2n$ and $L\to X$ a complex line bundle. Then
\begin{equation}\label{eq:chern}
	c_k(E\otimes_\CC L) = \sum_{j=0}^k {2n-j \choose k-j}c_j(E) c_1(L)^{k-j}.
\end{equation}
\end{lemma}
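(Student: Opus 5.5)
The statement to prove is Lemma \ref{lem:cherntensorproduct}. I would use the splitting principle. Pull $E$ back along the flag bundle $p\colon F(E)\to X$. The induced map $p^*$ on integral cohomology is injective, and $p^*E$ splits as a sum $L_1\oplus\dots\oplus L_{2n}$ of complex line bundles. Since Chern classes are natural and $p^*(E\otimes L)=p^*E\otimes p^*L$, it suffices to prove the identity when $E$ is such a sum. Write $x_i=c_1(L_i)$ and $y=c_1(L)$. Then
\[
E\otimes L\cong\bigoplus_{i=1}^{2n} L_i\otimes L, \qquad c_1(L_i\otimes L)=x_i+y,
\]
using additivity of $c_1$ on tensor products of line bundles. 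The Whitney sum formula then gives
\[
c(E\otimes L)=\prod_{i=1}^{2n}(1+x_i+y).
\]

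Next I would expand this product and extract the degree-$k$ part. A term of degree $k$ arises by choosing a subset $S$ of size $j$ of the indices and taking $x_i$ from each factor in $S$. From the remaining $2n-j$ factors we choose $k-j$ of them to contribute $y$, and the rest contribute $1$. Summing over subsets $S$ of size $j$ produces the elementary symmetric polynomial $e_j(x_1,\dots,x_{2n})=c_j(E)$. It comes with multiplicity $\binom{2n-j}{k-j}$ and a factor $y^{k-j}$. This yields
\[
c_k(E\otimes L)=\sum_{j=0}^k\binom{2n-j}{k-j}c_j(E)\,c_1(L)^{k-j}.
\]

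Injectivity of $p^*$ then transports the identity back to $X$. The only point needing care is that the splitting principle holds with integer coefficients. This is standard because $H^*(F(E))$ is a free $H^*(X)$-module via iterated projective bundles (Leray–Hirsch). The rank $2n$ plays no special role; it is just the $r$ in the binomial coefficient. There is no genuine obstacle here, only bookkeeping of the combinatorial coefficient.
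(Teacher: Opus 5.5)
Your proof is correct. The splitting principle (with injectivity of $p^*$ over $\ZZ$ via Leray--Hirsch) reduces the claim to expanding $\prod_{i=1}^{2n}(1+x_i+y)$, and your count of the coefficient $\binom{2n-j}{k-j}$ in front of $e_j(x_1,\dots,x_{2n})\,y^{k-j}$ is right. The paper gives no proof of its own; it cites the lemma as a standard exercise (Huybrechts, Exercise 4.4.6), and your argument is the standard computation intended there.
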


\section{Bundle decompositions and characteristic classes}

In this section we start the proof of Theorem \ref{thm:main}. We use the notation of Section \ref{subsec:aqs}, i.e., $M$ is an almost quaternionic manifold of dimension $4n$, endowed with the almost quaternionic structure $Q\subset \End(TM)$. Let $Z$ be the twistor space, as an almost complex manifold as described in Section \ref{subsec:aqs}, and recall the decomposition $TZ=V\oplus H$ of complex vector bundles.

\begin{proposition}\label{prop:VHquerH}
	We have an isomorphism 
	\[
	V\otimes_\CC \overline{H}\cong H
	\]
	of complex vector bundles. 
\end{proposition}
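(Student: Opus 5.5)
We will write down an explicit fiberwise map and check that it is complex linear and bijective. Fix $I\in Z$ with $x=\pi(I)$. By \eqref{eq:fiberV}, $V_I$ consists of the $J\in Q_x$ anticommuting with $I$, with complex structure $J\mapsto IJ$. Since $V_I$ has real dimension $2$, it is a complex line. Via $d\pi_I\colon H_I\to T_xM$ (Lemma \ref{lem:identitiesH}) we identify $H_I$ with $T_xM$ carrying the complex structure $I$. Then $\overline{H}_I$ is $T_xM$ with complex structure $-I$. The candidate isomorphism is evaluation:
\[
\Phi_I\colon V_I\otimes_\CC \overline{H}_I\to H_I,\qquad J\otimes v\mapsto J v,
\]
where $Jv$ is computed in $T_xM$ and lifted back to $H_I$ through $d\pi_I$.

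The key steps, in order, are as follows.

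\emph{Well-definedness.} First we check that the real bilinear map $(J,v)\mapsto Jv$ descends to the complex tensor product. This requires $(IJ)v=J(-Iv)$, i.e.\ $IJ=-JI$, which is exactly the defining property of $V_I$. So the map is balanced with respect to the structures $J\mapsto IJ$ on $V_I$ and $-I$ on $\overline H_I$.

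\emph{Complex linearity.} Next, with respect to the complex structure on $V\otimes\overline H$, multiplication by $i$ acts as $J\otimes v\mapsto IJ\otimes v$. This maps to $IJv=I(Jv)$, which is multiplication by $i$ in $H_I$. Hence $\Phi_I$ is $\CC$-linear.

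\emph{Bijectivity.} Both sides have complex rank $2n$, since $V$ is a line. Choose a nonzero $J\in V_I$. Then $J$ is invertible on $T_xM$, being a nonzero multiple of an almost complex structure (after normalizing, $J^2=-\id$). Thus $v\mapsto Jv$ is already surjective onto $T_xM$, and $\Phi_I$ is an isomorphism.

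\emph{Continuity.} Finally, the maps $\Phi_I$ depend smoothly on $I$, because they are built from the tautological data of $\pi^*Q$ and $\pi^*TM$ (Lemma \ref{lem:QV}) and the smooth bundle isomorphism $d\pi|_H$. They therefore assemble into a bundle isomorphism.

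The only step needing care is the sign bookkeeping in the well-definedness and linearity checks: one must use the conjugate $\overline{H}$ rather than $H$, precisely because $J$ anticommutes with $I$. The choice of complement $H$ plays no role, since $H\cong\pi^*TM$ with the tautological complex structure regardless of that choice.
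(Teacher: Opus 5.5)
Your proof is correct and is essentially the paper's argument: you use the same evaluation map $J\otimes v\mapsto (d\pi_I)^{-1}(J\,d\pi_I(v))$ and the same two sign checks, namely balancedness via $IJ=-JI$ and $\CC$-linearity via $(IJ)v=I(Jv)$. You also spell out fiberwise bijectivity and smoothness, which the paper leaves implicit; bijectivity follows because $V_I$ is a complex line and any nonzero $J\in V_I$ is invertible.
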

\begin{proof} 
	Recall that for a point $I\in Z$ the complex structure $J_I^H$ on $H_I$ is defined by pulling back the complex structure $I$ on $T_{\pi(I)}M$ via the isomorphism $d\pi_I:H_I\to T_{\pi(I)}M$. We define an isomorphism $\Psi:V\otimes \overline{H}\to H$ as follows: on the fiber over $I$ we put
	\[
	\Psi(J\otimes v)= (d\pi_I)^{-1}(J d\pi_I(v)).
	\]
	This is a well-defined complex linear map on the complex tensor product $V\otimes \overline{H}$, as the corresponding map on the Cartesian product $V\times \overline{H}\to H$ is $\CC$-bilinear:
	\[
	\Psi((I J), v)= (d\pi_I)^{-1}(IJ d\pi_I(v)) = J_I^H(\Psi(J,v))
	\]
	and
	\begin{align*}
	\Psi(J,-J_I^H(v)) & = -(d\pi_I)^{-1}(Jd\pi_I(J_I^H(v))) \\
	&=-(d\pi_I)^{-1}(JId\pi_I(v)) \\
	&= (d\pi_I)^{-1}(IJd\pi_I(v)) = J_I^H(\Psi(J,v)).
	\end{align*}
\end{proof}
One idea that enters the proof of Theorem \ref{thm:main} is to recursively determine the Chern classes of $H$. More precisely, we use Lemma \ref{lem:pontryaginchern} to write an even Chern class of $H$ in terms of lower Chern classes and the Pontryagin classes of $H$ (i.e., the Pontryagin classes of $M$, because of Lemma \ref{lem:identitiesH}). To determine the odd Chern classes of $H$ we apply Lemma \ref{lem:cherntensorproduct} to the isomorphism from Proposition \ref{prop:VHquerH} and obtain
\begin{equation} \label{eq:oddck}
	c_k(H) = \sum_{j=0}^k (-1)^j{2n-j \choose k-j} c_j(H)c_1(V)^{k-j}.
\end{equation}
For odd $k$, this formula determines $c_k(H)$ in terms of the lower Chern classes of $H$ and $c_1(V)$. Let us already note the following immediate consequence:
\begin{proposition}\label{prop:c1}
	We have $c_1(H)=n c_1(V)$ and $c_1(Z) = (n+1)c_1(V)$. In particular, the first Chern class of $Z$ is divisible by $n+1$.
\end{proposition}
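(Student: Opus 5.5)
The plan is to apply formula \eqref{eq:oddck} with $k=1$. The rank of $H$ is $2n$ (it has real rank $4n$), so Lemma \ref{lem:cherntensorproduct} applies to $E=\overline{H}$ and $L=V$, with $c_j(\overline{H})=(-1)^jc_j(H)$. For $k=1$ the sum has only the terms $j=0$ and $j=1$:
\[
c_1(H) = \binom{2n}{1} c_1(V) - c_1(H) = 2n\, c_1(V) - c_1(H).
\]
This gives $2c_1(H) = 2n\,c_1(V)$ in $H^2(Z;\ZZ)$.

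The main obstacle is dividing by $2$, since $H^2(Z;\ZZ)$ might contain $2$-torsion. To get around it, I would avoid the general formula and compute the first Chern class directly via determinant line bundles. Proposition \ref{prop:VHquerH} gives
\[
\Lambda^{2n}_\CC H \cong \Lambda^{2n}_\CC(V\otimes\overline{H}) \cong V^{\otimes 2n}\otimes \Lambda^{2n}\overline{H},
\]
which yields the same relation $c_1(H)=2n c_1(V) - c_1(H)$, so this alone does not remove the torsion issue. So I need genuinely more structure. The idea is to find a complex line bundle $L$ with $H\cong L\otimes W$ for some bundle $W$ satisfying $\overline{W}\cong W$, or, more directly, to show that $\Lambda^{2n}H\otimes V^{-n}$ is trivial.

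Locally, over a subset of $M$ where $Q$ admits a quaternionic base, $TM$ is an $\HH$-module. Choosing a complex structure $I$ identifies the fibre with $\CC^{2n}$, and the structure group of $H$ reduces to $\Sp(n)\Sp(1)$ acting through $\Sp(n)\times_{\ZZ_2}\Un(1)$. The stabilizer of a point of $S^2$ inside $\Sp(1)$ is $\Un(1)$. An element $(A,\lambda)$ acts on $H$ with determinant $\det(A)\lambda^{2n}=\lambda^{2n}$, because $\det A=1$ for $A\in\Sp(n)\subset\SU(2n)$. It acts on $V$ by $\lambda^2$, since $V$ is the tangent space of $S^2=\Sp(1)/\Un(1)$. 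Hence $\det H$ and $V^{\otimes n}$ are associated to the same character $\lambda^{2n}$ of the structure group of the pair $(H,V)$. The general structure group $\mathrm{GL}(n,\HH)\mathrm{Sp}(1)$ retracts onto this compact group, so this identification of characters gives $\Lambda^{2n}H\cong V^{\otimes n}$ globally, and therefore $c_1(H)=n\,c_1(V)$ with no torsion ambiguity. The point to check carefully is that the $\ZZ_2$ quotient does not spoil the character: $(-1,-1)$ acts trivially on both sides because $(-1)^{2n}=1$.

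Finally, Lemma \ref{lem:identitiesH} gives $TZ=V\oplus H$, so
\[
c_1(Z)=c_1(V)+c_1(H)=(n+1)c_1(V),
\]
and this shows that $c_1(Z)$ is divisible by $n+1$.
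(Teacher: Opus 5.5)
Your final argument is correct, but it is not the route the paper takes. The paper's proof is a one-liner. It puts $k=1$ in \eqref{eq:oddck}, obtains $c_1(H)=2n\,c_1(V)-c_1(H)$, and simply divides by $2$.

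Your objection to that step is legitimate. The Gysin sequence of $Z\to M$ gives $H^2(Z;\ZZ)\cong H^2(M;\ZZ)\oplus\ZZ$, so the division is only valid modulo the $2$-torsion of $H^2(M;\ZZ)$. For the main theorem this is harmless: the proposition is only used for $\CC\PP^{2m}$, which is simply connected and so has torsion-free $H^2$. It does matter for the spin corollary in dimension $8k$. That corollary uses $c_1(H)=2k\,c_1(V)$ on the nose for arbitrary $M$, and a $2$-torsion error term could have nonzero mod~$2$ reduction.

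You instead realize $\Lambda^{2n}_\CC H$ and $V^{\otimes n}$ as line bundles associated with the same character $[A,\lambda]\mapsto\lambda^{2n}$ of $\Sp(n)\Un(1)$. The relevant principal bundle is $P\to P/\Sp(n)\Un(1)=Z$, where $P$ is the $\Sp(n)\Sp(1)$-reduction of the frame bundle of $M$. This proves the stronger statement $\Lambda^{2n}_\CC H\cong V^{\otimes n}$, and hence the identity integrally for every almost quaternionic $M$. The paper's approach buys brevity by reusing the tensor-product formula that drives its recursive Chern class computations. Your intermediate detour through $\Lambda^{2n}(V\otimes\overline{H})$ can be dropped, as you yourself noticed.

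One detail you should make explicit. The phrase ``$V$ is the tangent space of $S^2=\Sp(1)/\Un(1)$'' does not by itself decide between weight $\lambda^{2}$ and $\lambda^{-2}$. That depends on the complex structure $J\mapsto IJ$ on $V_I$, and the wrong sign would give $\Lambda^{2n}_\CC H\cong V^{\otimes(-n)}$. The cleanest way to settle it is to note that the map $\Psi$ of Proposition~\ref{prop:VHquerH} is $\Sp(n)\Un(1)$-equivariant. Hence the $\Un(1)$-weight of $V\otimes\overline{H}$, which is the weight of $V$ minus $1$, must equal the weight $1$ of $H$. So $V$ has weight $2$, as you claimed.
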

\begin{proof}
	Equation \eqref{eq:oddck}, for $k=1$, reads 
	\[
	c_1(H) = 2n c_1(V) - c_1(H),
	\] 
	i.e., $c_1(H) = nc_1(V)$. The second equation then follows immediately from $TZ = V\oplus H$.
\end{proof}
In the context of complex contact manifolds (hence in particular for the twistor space of quaternion-Kähler manifolds), this statement was known before; it was shown by Kobayashi \cite{kobayashi} by determining explicitly the transition functions of the canonical bundle.

In \cite[Proposizione 12.2]{marchiafava-romani} Marchiafava--Romani show that every almost quaternionic manifold
of dimension $8n$ is spin. The authors in fact consider arbitrary almost quaternionic vector bundles, and make use of a splitting principle for those. We would like to give another proof of the above fact.

\begin{corollary}\label{Cor: AQ in dim 8n are spin}
Suppose $M$ is of dimension $8k$ and admits an almost quaternionic structure. Then $M$ is
spin.
\end{corollary}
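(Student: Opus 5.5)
The plan is to show that $w_1(M)$ and $w_2(M)$ vanish by pulling them back to the twistor space $Z$. There they can be computed from the complex bundle $H$, using Lemma \ref{lem:identitiesH} and Proposition \ref{prop:c1}. Injectivity of $\pi^*$ in low degrees then lets us descend the vanishing to $M$.

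First, since $\dim M = 8k = 4n$, the quaternionic dimension is $n = 2k$, which is even. By Lemma \ref{lem:identitiesH}, $H \cong \pi^*TM$ as real vector bundles, so $w_i(H) = \pi^* w_i(M)$ for all $i$.

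Since $H$ is a complex vector bundle:
\begin{itemize}
\item $w_1(H)=0$;
\item $w_2(H)$ is the mod $2$ reduction of $c_1(H)$;
\item by Proposition \ref{prop:c1}, $c_1(H) = n\,c_1(V) = 2k\,c_1(V)$, which is divisible by $2$, so $w_2(H) = 0$.
\end{itemize}
Hence $\pi^*w_1(M) = 0$ and $\pi^*w_2(M) = 0$ in $H^*(Z;\ZZ_2)$.

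It remains to see that $\pi^*\colon H^i(M;\ZZ_2)\to H^i(Z;\ZZ_2)$ is injective for $i\le 2$. Here $\pi\colon Z\to M$ is the unit sphere bundle of the rank $3$ real vector bundle $Q$. The Gysin sequence with $\ZZ_2$ coefficients needs no orientability assumption, and it reads
\[
\cdots \to H^{i-3}(M;\ZZ_2) \xrightarrow{\ \cup\, w_3(Q)\ } H^{i}(M;\ZZ_2) \xrightarrow{\ \pi^*\ } H^{i}(Z;\ZZ_2) \to \cdots .
\]
For $i \le 2$ the left-hand group vanishes, so $\pi^*$ is injective in these degrees. Equivalently, in the Serre spectral sequence with fibre $S^2$, the only differentials hitting the bottom row start from the row $q=2$ and land in total degree $\ge 3$.

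Therefore $w_1(M)=0$ and $w_2(M)=0$, so $M$ is orientable and spin. The one point needing care is that the Gysin sequence is used with $\ZZ_2$ coefficients, so no orientability of $Q$ is required. The rest is a direct combination of the earlier results.
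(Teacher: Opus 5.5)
Your proposal is correct and follows essentially the same route as the paper: identify $\pi^*w_2(M)=w_2(H)$ as the mod $2$ reduction of $c_1(H)=2k\,c_1(V)$, then use the Gysin sequence to get injectivity of $\pi^*$ in low degrees. The only differences are two small refinements. You deduce orientability from $w_1(H)=0$ by the same injectivity argument, whereas the paper simply takes orientability of $M$ for granted. You also state explicitly that the Gysin sequence is used with $\ZZ_2$ coefficients, which the paper leaves implicit.
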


\begin{proof}
We recall that $\pi^{\ast}(TM) \cong H$, thus $\pi^{\ast}(w_{2}(M))  = w_{2}(H)$. Now 
by Proposition \ref{prop:c1} $c_{1}(H) = 2k c_{1}(V)$ and since $w_{2}$ is the mod $2$
reduction of $c_{1}$, we infer $w_{2}(H)=0$. Moreover, using the Gysin sequence, we note
that $\pi^{\ast}$ is injective for every $2$-sphere bundle on cohomology in degree $2$.
Thus $w_{2}(M)=0$ and since $M$ is orientable, it follows that $M$ is spin.
\end{proof}

In Section \ref{sec:cpn} we will need a further bundle decomposition, this time of the complexified tangent bundle of the $4n$-dimensional almost quaternionic manifold $M$. We will assume only that $Q$ admits a ${\mathrm{Spin}}^c$ structure, which is equivalent to the third integral Stiefel-Whitney class $W_3(Q)\in H^3(M)$ being zero.

We have ${\mathrm{Spin}}^c(3) = ({\mathrm{Spin}}(3)\times \Un(1))/\ZZ_2 \cong \Un(2)$; under this identification, the canonical projection ${\mathrm{Spin}}^c(3)\cong \Un(2) \to \SO(3)$ is given by the adjoint representation $\Un(2)\to \SO({\mathfrak{su}}(2))\cong \SO(3)$ of $\Un(2)$ on $\mathfrak{su}(2)$. 
This shows the following well-known (cf.\ \cite[Section 5.6]{Donaldson}) reformulation of the existence condition of a ${\mathrm{Spin}}^c$ structure on $Q$:

\begin{proposition}\label{prop:spincstructureofQ}
	Assume that the third integral Stiefel-Whitney class $W_3(Q)$ vanishes. Then there is a
	Hermitian vector bundle $F \to M$ of rank $2$ such that
	\[
	\mathfrak{su}(F) \cong Q
	\]
where
\[
	\mathfrak{su}(F) = \{A \in \mathrm{End}(F) \mid A^{\ast} = -A,\, \mathrm{tr}(A)=0\}.
\]

\end{proposition}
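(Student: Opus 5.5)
The plan is to lift the $\SO(3)$-structure of $Q$ to a ${\mathrm{Spin}}^c(3)\cong\Un(2)$-structure, and then let $F$ be the complex rank-$2$ bundle associated to the standard representation of $\Un(2)$ on $\CC^2$.

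First, choose the metric on $Q$ from Section \ref{subsec:aqs}, in which local quaternionic bases are orthonormal. With this metric, $Q$ is an oriented rank-$3$ Euclidean bundle; orientability holds because changes of quaternionic base lie in $\SO(3)$. Let $P_{\SO}(Q)$ denote its oriented orthonormal frame bundle, so that $Q\cong P_{\SO}(Q)\times_{\SO(3)}\RR^3$.

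Since $W_3(Q)=0$, there is a principal ${\mathrm{Spin}}^c(3)$-bundle $P$ together with a map $P\to P_{\SO}(Q)$ that is equivariant with respect to the canonical projection $\rho\colon{\mathrm{Spin}}^c(3)\to\SO(3)$. I take this as the standard characterisation of ${\mathrm{Spin}}^c$ structures by the vanishing of $W_3$, which the paper has already stated as equivalent. It follows that $Q\cong P\times_{\rho}\RR^3$.

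Now use the identification ${\mathrm{Spin}}^c(3)\cong\Un(2)$ described before the proposition, under which $\rho$ becomes the adjoint representation $\mathrm{Ad}\colon\Un(2)\to\SO(\mathfrak{su}(2))$. Here we identify $\mathfrak{su}(2)\cong\RR^3$ isometrically via the inner product $-\tfrac12\mathrm{tr}(AB)$. This gives
\[
Q\cong P\times_{\Un(2)}\mathfrak{su}(2),
\]
with $\Un(2)$ acting by conjugation.

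Define $F:=P\times_{\Un(2)}\CC^2$. This is a Hermitian rank-$2$ bundle, because $\Un(2)$ preserves the standard Hermitian product on $\CC^2$. The bundle $\End(F)$ is $P\times_{\Un(2)}\End(\CC^2)$ with the conjugation action. The conditions $A^*=-A$ and $\mathrm{tr}A=0$ are preserved by unitary conjugation, so
\[
\mathfrak{su}(F)=P\times_{\Un(2)}\mathfrak{su}(2)\cong Q.
\]

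The only point needing care is that the ${\mathrm{Spin}}^c(3)$-projection really corresponds to $\mathrm{Ad}$ under the isomorphism with $\Un(2)$. This is checked directly. The map $[q,z]\mapsto zq$, with $\Sp(1)=\SU(2)$ and $\Un(1)$ the scalars, gives the isomorphism ${\mathrm{Spin}}^c(3)\cong\Un(2)$. Scalars act trivially under conjugation, and $\SU(2)$ acts on $\mathfrak{su}(2)$ by the double cover $\SU(2)\to\SO(3)$. The paper states this identification, so it may be taken as given. No real obstacle is expected beyond this bookkeeping.
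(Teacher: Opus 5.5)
Your proposal is correct and follows the same route as the paper. You use $W_3(Q)=0$ to lift the $\SO(3)$-frame bundle of $Q$ to a ${\mathrm{Spin}}^c(3)\cong\Un(2)$-bundle $P$, note that the projection becomes the adjoint representation on $\mathfrak{su}(2)$, and take $F=P\times_{\Un(2)}\CC^2$. You also make explicit the bookkeeping the paper leaves to its remark and the reference to Donaldson, namely the isomorphism $[q,z]\mapsto zq$ and the identification $\mathfrak{su}(F)=P\times_{\Un(2)}\mathfrak{su}(2)$.
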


\begin{remark}
Proposition \ref{prop:spincstructureofQ} implies that the twistor fibration $Z \to M$ can be described as a
projectivization of a complex vector bundle of rank $2$, whenever $W_3(Q)=0$. Indeed, we have that $Z$ is just the projectivization $\PP(F)$ of $F$: denoting by $P$ the $\Un(2)$-principal bundle of Hermitian frames in $F$, we have $F = P\times_{\Un(2)} \CC^2$, and hence $\PP(F) = P\times_{\Un(2)} \CC \PP^1$. Now, $\CC \PP^1$ is $\Un(2)$-equivariantly diffeomorphic to the unit sphere $S({\mathfrak{su}}(2))$ in ${\mathfrak{su}}(2)$; this shows that $\PP(F) \cong P\times_{\Un(2)} S({\mathfrak{su}}(2))$, which is the unit sphere bundle in $P\times_{\Un(2)} {\mathfrak{su}}(2)\cong{\mathfrak{su}}(F) \cong Q$.
\end{remark}
\begin{remark}\label{Rem: freedom of choosing F}
Note that the bundle $F \to M$ of Proposition \ref{prop:spincstructureofQ} is not unique.
In fact, if $L \to M$ is a Hermitian line bundle, then
\[
  \mathfrak{su}(F) \longrightarrow \mathfrak{su}(F \otimes_\CC L), \quad A \mapsto A \otimes
	\id_{L}
\]
is an isomorphism of real vector bundles. For all choices of $F$ we have the equality of Stiefel-Whitney classes
\begin{equation}\label{eq:w2ofF}
w_2(F) = w_2(Q),
\end{equation}
see \cite[Section 5.6]{Donaldson}.
\end{remark}

\begin{proposition}\label{P: TM_C = E ox F}
	For any $F\to M$ such that ${\mathfrak{su}}(F)\cong Q$, there exists a complex rank $2n$ vector bundle $E \to  M$ such that
	\[
	TM \otimes \mathbb{C} \cong E \otimes_\CC F. 
	\]
\end{proposition}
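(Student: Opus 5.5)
We construct $E$ as a bundle of $F$-linear maps: set $E := \mathrm{Hom}_{\mathfrak{su}(F)}(F, TM\otimes\CC)$, or equivalently recognise $TM\otimes\CC$ as a module over the bundle of algebras $\End(F)$. The guiding fact is pointwise linear algebra. On $\HH^n \cong \RR^{4n}$ with the standard structure, the quaternions act by $\HH \cong \mathfrak{su}(2)\oplus\RR$, and $\HH\otimes_\RR\CC \cong \End(\CC^2)=M_2(\CC)$. Hence $\RR^{4n}\otimes\CC$ is a module over $M_2(\CC)$, and every such module is $\CC^2\otimes\CC^{2n}$ with $M_2(\CC)$ acting on the first factor. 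Equivariantly for $\Sp(n)\Sp(1)$ this is the familiar decomposition $\RR^{4n}\otimes\CC = \CC^{2n}\otimes\CC^2$. The task is to globalise this using the given $F$.

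\textbf{Step 1: an algebra bundle map.} Let $A := \underline{\RR}\oplus Q\subset\End(TM)$. Fibrewise this is a subalgebra isomorphic to $\HH$, since $I^2=-1$ and $IJ=K$ hold for local quaternionic bases. On the other side, $\underline{\RR}\cdot\id\oplus\mathfrak{su}(F)\subset\End(F)$ is a real subalgebra, fibrewise $\{a\id + X : X\in\mathfrak{su}(2)\}\cong\HH$. We need an isomorphism of algebra bundles between these, not merely of vector bundles. Here lies the subtle point: the isomorphism $\mathfrak{su}(F)\cong Q$ from Proposition \ref{prop:spincstructureofQ} must respect the brackets. It comes from the adjoint representation $\Un(2)\to\SO(\mathfrak{su}(2))$, so it identifies $\mathfrak{su}(F)$, with the metric $-\tfrac12\mathrm{tr}$ suitably normalised, with $Q$ as $\SO(3)$-bundles. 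Both algebra structures are determined by the metric, the orientation, and the cross product (quaternion multiplication of imaginary units $u,v$ is $-\langle u,v\rangle+u\times v$). So, after rescaling and possibly composing with $-\id$ to match orientations, the map is an algebra isomorphism. Complexifying gives $A\otimes\CC\cong\End(F)$ as bundles of complex algebras, and $TM\otimes\CC$ becomes a left $\End(F)$-module bundle.

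\textbf{Step 2: define $E$ and the evaluation map.} Put
\[
E := \mathrm{Hom}_{\End(F)}(F,\, TM\otimes\CC),
\]
the bundle of $\End(F)$-equivariant homomorphisms. There is a natural evaluation map $\mathrm{ev}\colon F\otimes_\CC E\to TM\otimes\CC$, $f\otimes\phi\mapsto\phi(f)$. Fibrewise, by the classification of modules over $M_2(\CC)$ (Morita equivalence), we have $TM_x\otimes\CC\cong\CC^2\otimes W$ for some $W$ of dimension $2n$, with $\mathrm{Hom}_{M_2(\CC)}(\CC^2,\CC^2\otimes W)=W$. Thus $E$ has constant rank $2n$ and $\mathrm{ev}$ is a fibrewise isomorphism. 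Local triviality of $E$ follows because the equivariant maps are the kernel of a constant-rank bundle map.

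\textbf{Main obstacle.} The only step needing care is Step 1: checking that the abstract isomorphism $\mathfrak{su}(F)\cong Q$ can be chosen multiplicative, including the orientation and normalisation issue. If the orientation is wrong, composing with $-\id$ on $Q$ fixes it. Step 2 is then routine linear algebra.
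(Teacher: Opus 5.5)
Your proof is correct and follows the same route as the paper: make $TM\otimes\CC$ a module over $\End(F)=\underline{\CC}\oplus\mathfrak{sl}(F)\cong\underline{\CC}\oplus(Q\otimes\CC)$, set $E=\mathrm{Hom}_{\End(F)}(F,TM\otimes\CC)$, and conclude fibrewise from the uniqueness of the simple $M_2(\CC)$-module. Your Step 1 is more careful than the paper, which only checks that $\End(F)\to\End(TM\otimes\CC)$ is injective and tacitly assumes it is multiplicative; the one adjustment is that the statement allows an arbitrary abstract isomorphism $\phi\colon\mathfrak{su}(F)\to Q$, for which a constant rescaling does not suffice, so you should first replace $\phi$ by the isometric factor $\phi\,(\phi^{*}\phi)^{-1/2}$ of its polar decomposition and then correct the orientation by $-\id$.
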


\begin{proof}
	First note that $Q \otimes \mathbb{C} \subset \mathrm{End}(W)$ where $W = TM \otimes \mathbb{C}$. We claim that $\mathrm{End}(F)$ embeds into
	$\mathrm{End}(W)$ and turns $W$  into a $\mathrm{End}(F)$ module. Note that
	\[
	\mathrm{End}(F) = \mathbb{C} \cdot \id_{F} \oplus \, \mathfrak{sl}(F) =
	\underline{\mathbb{C}} \oplus \mathfrak{sl}(F)
	\]
	where 
	\[
	\mathfrak{sl}(F) = \{ A \in \mathrm{End}(F) \mid \mathrm{tr} A = 0\}.
	\]
	Since by Proposition \ref{P: Chern classes of F} $Q \otimes \mathbb{C} \cong \mathfrak{su}(F) \otimes
	\mathbb{C}  = \mathfrak{sl}(F)$ we define a map 
	\[
	\mathrm{End}(F) \cong \underline{\mathbb{C}} \oplus (Q \otimes \mathbb{C}) \to \mathrm{End}(W), \quad  (\lambda, A) \mapsto \lambda \cdot \id_{W} + A.
	\]
	We show that this map is an embedding, which is equivalent to argue that $\mathbb{C} \cdot
	\id_{W} \cap (Q \otimes \mathbb{C}) = \{0\}$. But since $Q$ is locally spanned by $I,J,K$
	and the addition of $\id_{W}$ gives an action of the complexified quaternion algebra
	acting on $W$ the above intersection must be trivial.
	
	Now set
	\[
	E:= \operatorname{Hom}_{\mathrm{End}(F)}(F, W) 
	\]
	i.e. all vector bundle morphisms $F \to W$ which are equivariant with respect to the $\mathrm{End}(F)$-algebra representations. We claim that the map
	\[
	E \otimes_\CC F \longrightarrow W, \quad (A \otimes v) \mapsto A(v) 
	\]
	is an isomorphism. To see this, consider the situation pointwise. By \cite[Section XVII.4, in particular Corollary 4.6]{Lang}, there is up to isomorphism only one simple ${\End(\CC^2)}$-module, so that the ${\End (F_p)}\cong {\End(\CC^2)}$-module $W_p$ is isomorphic to a direct sum of copies of $\CC^2$. This implies that $W_p \cong \CC^2\otimes_\CC \CC^{2n}$ (with the action only on the left factor), and this isomorphism is just the isomorphism in question, with $E_p \cong \CC^{2n}$.
\end{proof}

\begin{remark}
	It is instructive to compare this to Salamon's $E$-$H$-formalism \cite{Salamon}. An almost quaternionic structure (together with the choice of a compatible metric) gives rise to a reduction of the structure group to $\Sp(n)\Sp(1)$. In this formalism, one considers locally defined complex vector bundles $E$ and $H$ of rank $2n$ and $2$, respectively, associated to the standard representations of $\Sp(n)$ and $\Sp(1)$ on $\CC^{2n}$ and $\CC^2$. (Note that we used $H$ already for the horizontal bundle over the twistor space; we will not use the local bundles $E$ and $H$ on $M$ outside of this remark.) In this context, the isomorphism $TM\otimes \CC = E\otimes_\CC H$ is well-known (in particular, the right hand side is globally defined, although $E$ and $H$ are not); see, for instance, \cite[Section 2]{Cabrera}. Note that the bundle $H$ naturally has structure group $\SU(2)$, and is defined only locally. To compare, our bundle $F$ has structure group $\Un(2)$, and is defined whenever $W_3(Q)$ vanishes. In the situation that is of interest to us, see Proposition \ref{P: Chern classes of F} below, $w_2(Q)\neq 0$ so $Q$ does not have a spin structure.
\end{remark}

To proceed with the proof of Theorem \ref{thm:main} we now consider the different compact rank one symmetric spaces separately. Real projective spaces $\RR P^{4n}$ cannot admit any almost quaternionic structure as they are not orientable.

\section{Spheres}\label{sec:spheres}

It is known that spheres other than $S^4$ do not admit an almost quaternionic structure. In the mathoverflow thread \cite{overflowmoroianu} this was justified in several ways; one of them was to apply \cite[Theorem 1.4]{Sutherland} to see that the total space of an $S^2$-bundle over $S^{4n}$, $n\geq 2$, never admits an almost complex structure, making the existence of a twistor space of $S^{4n}$ impossible. Let us give an alternative short argument using our techniques.

Assume that $S^{4n}$, $n\geq 2$, had an almost quaternionic structure, and denote by $Z$ its twistor space. As the tangent bundle of $S^{4n}$ is stably trivial, all Pontryagin classes of $S^{4n}$ are zero, hence because of $H=\pi^*TS^{4n}$ (see Lemma \ref{lem:identitiesH}) we have $p_k(H)=0$ for all $k\geq 1$. Applying Lemma \ref{lem:pontryaginchern} to $H$, we thus obtain
\begin{equation}\label{eq:Pontryagin0}
c_{n}(H)^2 + 2\sum_{i=1}^n (-1)^i c_{n-i}(H)c_{n+i}(H) = p_n(H) = 0.
\end{equation}
As $S^{4n}$ has nonzero cohomology only in degrees $0$ and $4n$, the cohomology of $Z$
vanishes in all degrees except $0,2,4n$ and $4n+2$. As $n\geq 2$, it follows that the only
term in the left hand side of Equation \eqref{eq:Pontryagin0} that does not automatically
vanish is the summand for $i=n$, i.e., it follows that $(-1)^n 2 c_{2n}(H)=0$, i.e.,
$c_{2n}(H)=0$. This is a contradiction because the top Chern class equals the Euler class,
$H^{4n}(S^{4n})\to H^{4n}(Z)$ is an isomorphism by the Gysin sequence and the Euler characteristic $\chi(S^{4n})=2$ of $S^{4n}$ is nonzero.

\section{The Cayley plane}\label{sec:cayley}

In this section, let us consider the Cayley plane $\mathbb{O}P^2$, and show that it does not admit an almost quaternionic structure. For the sake of contradiction, let us assume it had, and denote by $Z$ the corresponding twistor space. 

Let $u\in H^8({\mathbb{O}}P^2)$ be a generator, so that the cohomology ring $H^*({\mathbb{O}}P^2)= \ZZ[u]/(u^3)$. After potentially replacing $u$ by $-u$, the nontrivial Pontryagin classes of ${\mathbb{O}}P^2$ are
$p_2(\mathbb{O}P^2)=6u$ and $p_4(\mathbb{O}P^2)=39u^2$, see \cite[\S 19]{BorelHirzebruch}.  The fiber inclusion $S^2\to Z$ induces an isomorphism $\ZZ = \pi_2(S^2)\to \pi_2(Z)$, hence by Hurewicz also an isomorphism $H_2(S^2)\to H_2(Z)$. By the universal coefficient theorem, we obtain an isomorphism $H^2(Z)\to H^2(S^2)$, so that the assumptions of the Leray-Hirsch theorem over the integers are satisfied; as necessarily $x^2=0$, the cohomology ring of $Z$ can be written as
\[
H^*(Z)\cong \ZZ[u,x]/(u^3,x^2)
\]
where $x\in H^2(Z)$ restricts to a generator of the fiber $S^2$. In particular, $Z$ has nonzero cohomology in degrees $0,2,8,10,16$ and $18$, so that
automatically $c_2(H)=c_3(H) = c_6(H)=c_7(H)=0$.

As $c_3(H)=0$, Lemma \ref{lem:pontryaginchern} yields $6u = p_2(H) =  2c_4(H)$,  i.e.
\[
c_4(H) = 3u,
\]
and then by the same lemma $39u^2 = p_4(H) = c_4(H)^2 +2c_8(H)$ (again, all other summands vanish), i.e.
\[
c_8(H) = 15u^2.
\]
This is a contradiction, as $\chi({\mathbb{O}}P^2) = 3$ and $\pi^*:H^{16}({\mathbb{O}}P^2)\to H^{16}(Z)$ is an isomorphism by the Gysin sequence.

\section{Complex projective spaces}\label{sec:cpn}
Let us come now to the most interesting case, that of $M:=\CC \PP^{2n}$. Note that it is already known that $\CC \PP^{4n}$ is not almost quaternionic, as almost quaternionic manifolds of dimension $8n$ are spin \cite{marchiafava-romani}, a fact reproven above in Corollary \ref{Cor: AQ in dim 8n are spin}. As explained in the introduction, we will use K-theory in this section; see Section \ref{rem:remark} below for a different proof of a part of our main theorem using the Chern class approach from the previous sections.

The cohomology ring of $\mathbb C\mathbb P^{N}$, where $N=4n+2$, is the truncated polynomial
ring
\[
	H^{\ast}(\mathbb C\mathbb P^{N}) = \mathbb{Z}[u] / (u^{N+1}).
\]
We fix as generator $u \in H^{2}(\mathbb C\mathbb P^{N};\mathbb{Z})$ the first Chern class $c_1(H)$ of the dual $H\to \CC \PP^n$ of the tautological bundle (do not confuse this with the horizontal bundle on the twistor space, which does not appear in this section). Let us refine the statement of Proposition \ref{prop:spincstructureofQ} in this setting.

\begin{proposition}\label{P: Chern classes of F}
Assume that $\mathbb C\mathbb P^{N}$ admits an almost quaternionic structure.
Then there is a complex rank $2$ vector bundle $F \to \mathbb C\mathbb P^{N}$ such that
\[
	\mathfrak{su}(F) \cong Q, \quad c_{1}(F) = u, \quad c_{2}(F) = 2m u^{2} 
\]
for some $m \in \mathbb{Z}$.
\end{proposition}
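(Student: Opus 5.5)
We need to produce $F$ with $\mathfrak{su}(F)\cong Q$, $c_1(F)=u$ and $c_2(F)$ even. The plan is to first show that $W_3(Q)=0$, then determine $w_2(Q)$, then normalize $c_1(F)$ by twisting with a line bundle, and finally extract the parity of $c_2(F)$ from $p_1(Q)$.

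\emph{Existence of $F$.} Since $H^3(\CC\PP^N)=0$, the class $W_3(Q)$ vanishes automatically. Proposition \ref{prop:spincstructureofQ} then gives a rank $2$ Hermitian bundle $F_0$ with $\mathfrak{su}(F_0)\cong Q$.

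\emph{Computing $w_2(Q)$.} By Remark \ref{Rem: freedom of choosing F}, twisting with $L=H^{k}$ replaces $c_1(F_0)$ by $c_1(F_0)+2ku$. Hence only the parity of $c_1(F_0)=a u$ matters, and that parity is $w_2(F_0)=w_2(Q)$ by \eqref{eq:w2ofF}. So I need to show $w_2(Q)\neq 0$. I would argue via the twistor space as in Corollary \ref{Cor: AQ in dim 8n are spin}, now with $\dim M=4(2n+1)$, so that the quaternionic dimension $2n+1$ is odd. Proposition \ref{prop:c1} gives $c_1(H)=(2n+1)c_1(V)$, so $w_2(H)=w_2(V)$ mod $2$. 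By Lemma \ref{lem:QV}, $w_2(V)=w_2(\pi^*Q)$. Together with $H\cong\pi^*TM$ this yields $\pi^*w_2(TM)=\pi^*w_2(Q)$. Since $\pi^*$ is injective in degree $2$ (Gysin sequence, now with $\ZZ_2$ coefficients, which works because the fiber is $S^2$ and $H^{-1}=0$), we get $w_2(Q)=w_2(\CC\PP^N)=(N+1)u \bmod 2 = u\bmod 2\neq0$. Thus $a$ is odd, and after twisting we may take $c_1(F)=u$.

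\emph{Parity of $c_2(F)$.} I would relate $p_1(Q)$ to the Chern classes of $F$. We have $Q\otimes\CC=\mathfrak{sl}(F)$ and $\mathrm{End}(F)=F\otimes F^*$, so
\[
c_2(\mathfrak{sl}(F))=c_2(F\otimes F^*)=4c_2(F)-c_1(F)^2.
\]
This gives $p_1(Q)=-c_2(Q\otimes\CC)=c_1(F)^2-4c_2(F)=u^2-4c_2(F)$. Writing $c_2(F)=b u^2$, the remaining claim is that $b$ is even. This step is the main obstacle, and I expect it to need global input rather than just the structure of $Q$. My first attempt would use Proposition \ref{P: TM_C = E ox F} together with integrality. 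Set $TM\otimes\CC\cong E\otimes F$ and compare total Chern classes, using $c(T\CC\PP^N\otimes\CC)=(1-u^2)^{N+1}$. In particular, $c_1(E\otimes F)=2c_1(E)+2n' u=0$ with $2n'=\operatorname{rank} E=2(2n+1)$, which forces $c_1(E)=-(2n+1)u$. Then I would compare $c_2$, or better the first Pontryagin class $p_1(M)=(N+1)u^2=(4n+3)u^2$, against $p_1$ of $E\otimes F$. The latter is expressed through $c_1(E),c_2(E),c_1(F),c_2(F)$. My hope is that the result reduces modulo $2$ (or $4$) to a condition forcing $b$ even. If this does not suffice, a fallback is to use that $E$ carries a quaternionic-type structure twisted by $F$ (since $W$ is real), i.e.\ $\overline{E}\cong E\otimes \det F^{-1}$-type relations. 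I would use these to pin down $c_2(E)$ and hence the parity of $b$.
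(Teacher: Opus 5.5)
\textbf{There is a genuine gap: you never prove that $c_2(F)$ is even.} Your first two steps are fine and match the paper. $W_3(Q)=0$ because $H^3(\CC\PP^N)=0$. You derive $w_2(Q)=w_2(\CC\PP^N)=\overline{u}$ from the twistor space, using $c_1(H)=(2n+1)c_1(V)$, $\pi^*Q\cong V\oplus\underline{\RR}$, $H\cong\pi^*TM$ and injectivity of $\pi^*$ on $H^2(-;\ZZ_2)$; this is a valid self-contained substitute for the paper's citation of Marchiafava--Romani. Twisting then gives $c_1(F)=u$.

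For the last claim you offer only a hope, and the route you suggest does not deliver it at the level you propose. Write $c_2(E)=e_2u^2$ and $c_2(F)=bu^2$, with $c_1(E)=-(2n+1)u$. Comparing $c_2(E\otimes F)$ with $c_2(T\CC\PP^N\otimes\CC)=-(4n+3)u^2$ (equivalently, comparing $p_1$) gives exactly
\[
e_2+(2n+1)b=2n^2-1 .
\]
So only $e_2+b$ is forced to be odd: the parity of $b$ is traded for the unknown parity of $c_2(E)$. The fallback does not help in this degree either. A relation $\overline{E}\cong E\otimes L$ forces $c_1(L)=u$ by comparing first Chern classes, and then $c_2(E\otimes L)=c_2(E)=c_2(\overline{E})$ holds automatically. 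Pushing the comparison to higher Chern classes is not carried out, and there is no indication it would isolate $b$. By your own formula $p_1(Q)=(1-4b)u^2$, the claim is equivalent to $p_1(Q)\equiv u^2 \bmod 8$, which such low-degree integrality checks do not readily see.

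\textbf{The missing idea is Steenrod squares.} The parity of $c_2(F)$ is forced for \emph{any} complex rank $2$ bundle on $\CC\PP^N$ with $N\geq 3$ and odd $c_1$, so neither $TM$ nor $E$ is needed. After your normalization, $w_2(F)=\overline{u}$ and $w_4(F)=b\,\overline{u}^2$, as the mod $2$ reductions of $c_1(F)$ and $c_2(F)$. Wu's formula gives
\[
\mathrm{Sq}^2 w_4(F)=w_2(F)w_4(F)+w_6(F),
\]
and $w_6(F)=0$ because the underlying real bundle has rank $4$. By the Cartan formula, $\mathrm{Sq}^2(\overline{u}^2)=2\overline{u}^3=0$. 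Hence $0=b\,\overline{u}^3$ in $H^6(\CC\PP^N;\ZZ_2)$. Since $\overline{u}^3\neq 0$ for $N\geq 3$ (here $N=4n+2\geq 6$), $b$ is even. This is exactly the paper's argument, and it is the step your proposal needs in place of the tensor-product comparison.
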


\begin{proof}
Let $F\to \CC \PP^N$ be a bundle as in Proposition \ref{prop:spincstructureofQ}, i.e., such that ${\mathfrak{su}}(F) \cong Q$. Recall from Remark \ref{Rem: freedom of choosing F} that  we may tensor $F$ with any line bundle $L \to \mathbb C\mathbb P^{N}$, without destroying the isomorphism $\mathfrak{su}(F \otimes_\CC L) \cong Q$. 

By Equation \eqref{eq:w2ofF} we have the equality of second Stiefel-Whitney classes $w_{2}(F) = w_{2}(Q)\in H^2(\CC \PP^N;\ZZ_2)$, and from \cite[Proposizione 12.2]{marchiafava-romani} we obtain 
\[
w_2(Q) = (2n+1)w_{2}(Q) = w_{2}(\mathbb C\mathbb P^{N})
= u \mod 2
\] since $\dim \mathbb C\mathbb P^{N} = 8n+4$. This implies that $c_{1}(F) = (2l +1)\cdot u$ for some $l\in \ZZ$. Let $L
\to \mathbb C\mathbb P^{N}$ be the line bundle defined by $c_{1}(L) = -l \cdot u$, then by
formula \eqref{eq:chern}
\[
	c_{1}(F \otimes_\CC L)  = c_{1}(F) + 2c_{1}(L) = u.
\]
So we may assume that $c_{1}(F)=u$. It remains to show that $c_{2}(F)$ is an even
multiple of $u^{2}$. As $w_{2k}(F) \equiv c_{k}(F) \mod 2$, we only need to show that $w_4(F)=0$. Let $c_{2}(F) = a \cdot u^{2}$ for $a \in \mathbb{Z}$. By the Wu
formula (see \cite[p. 197]{MR1702278})  we have 
\begin{align}\label{eq:Wu Formula}
	\mathrm{Sq}^{2}(w_{4}(F)) = w_{2}(F)w_{4}(F) + w_{6}(F) = w_{2}(F)w_{4}(F)
\end{align}
since $w_{6}(F)=0$ because the underlying real vector bundle of $F$ has rank $4$.  If $\overline{u}$ denotes the mod $2$ reduction
of $u$, then 
\[
  \mathrm{Sq}^{2}(\overline{u}^{2}) = {2 \choose 1}\overline{u}^{3} = 0  
\]
and therefore by Equation \eqref{eq:Wu Formula}
\[
	0 = \mathrm{Sq}^{2}(a \overline{u}^{2}) = a \cdot \overline{u}^{3}
\]
from which we infer that $a$ must be even.
\end{proof}
Let us apply Proposition \ref{P: TM_C = E ox F} to the bundle $F\to \CC \PP^N$ from Proposition \ref{P: Chern classes of F}, in order to obtain a complex rank $N$ vector bundle $E\to \CC \PP^N$ such that
\begin{equation}\label{eq:tensorproddecomp}
W:=T(\CC \PP^N)\otimes \CC \cong E\otimes_\CC F.
\end{equation}

Next, we recall some basic facts on the K-theory of complex projective spaces. Denote by
$K:=K^0(\mathbb C\mathbb P^{N})$ and put $z = H-1 \in K$. Then we have
\[
	K \cong \mathbb{Z}[z]/(z^{N+1}).
\]
For $r \in \mathbb{N}$ let $K_{r} := K/(2^{r} \cdot K)$. We obtain an evaluation map
\[
	\varepsilon \colon K_{N+1} \longrightarrow \mathbb{Z}_{2^{N+1}}, \quad z \mapsto -2 \mod
	2^{N+1}
\]
which is a well-defined ring homomorphism since $\varepsilon(z^{N+1}) = (-2)^{N+1} \equiv
0 \mod 2^{N+1}$. We will create a contradiction to \eqref{eq:tensorproddecomp} by showing $\varepsilon(W)\neq 0$ but
$\varepsilon(F)=0$. 

\begin{proposition}\label{P: eps of W is not zero}
We have $\varepsilon(W) \neq 0$.
\end{proposition}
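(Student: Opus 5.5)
The plan is to compute the class of $W$ in $K$ explicitly and then evaluate. The only input needed is the standard description of the complexified tangent bundle of projective space, so the argument is a short computation. There is no real obstacle, only care that every step is a ring-homomorphism statement.

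First, recall the Euler sequence $T\CC\PP^N\oplus\underline{\CC}\cong (N+1)H$ of complex vector bundles. Complexifying the underlying real bundle of a complex bundle $V$ gives $V\otimes_\RR\CC\cong V\oplus\overline{V}$. Since $\overline{\underline{\CC}}=\underline{\CC}$ and $\overline{H}\cong H^*$, we obtain in $K$
\[
W+2=(N+1)(H+\overline{H}),\qquad\text{i.e.}\qquad W=(N+1)(H+\overline{H})-2 .
\]
Here $H=1+z$. Since $H\otimes\overline{H}$ is trivial, $\overline{H}$ is the multiplicative inverse $(1+z)^{-1}$ in $K$; this is a genuine unit because $z$ is nilpotent.

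Next, apply $\varepsilon$, composed with the reduction $K\to K_{N+1}$. Because $\varepsilon$ is a ring homomorphism, $\varepsilon(H)=1-2=-1$ in $\ZZ_{2^{N+1}}$. Applying $\varepsilon$ to $H\overline{H}=1$ gives $\varepsilon(\overline{H})=\varepsilon(H)^{-1}=-1$. Therefore
\[
\varepsilon(W)=(N+1)(-1-1)-2=-2N-4=-2(N+2)\mod 2^{N+1}.
\]

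Finally, check that this residue is nonzero. With $N=4n+2$ we have $N+2=4(n+1)$, so $\varepsilon(W)=-8(n+1)$. The $2$-adic valuation of $8(n+1)$ is $3+v_2(n+1)$. Using $v_2(n+1)\le\log_2(n+1)$, this is at most $3+\log_2(n+1)$, which is strictly less than $N+1=4n+3$ for all $n\ge0$. Hence $2^{N+1}\nmid 8(n+1)$, and so $\varepsilon(W)\neq0$.

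The one point to verify carefully is the sign and exponent bookkeeping in this valuation estimate, together with the correct form of the Euler sequence (rank $N+1$, trivial summand of rank one). Everything else is formal.
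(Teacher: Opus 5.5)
Your proof is correct and follows the paper's route: the Euler sequence gives $W=(N+1)(H+\overline{H})-2$, which you evaluate to $-8(n+1)$ modulo $2^{4n+3}$. The one simplification is that you get $\varepsilon(\overline{H})=-1$ directly from $H\overline{H}=1$, instead of expanding $\overline{z}$ as a power series in $z$. There is one small slip: your inequality $3+\log_2(n+1)<4n+3$ is an equality at $n=0$, and indeed $\varepsilon(W)=-8\equiv 0 \bmod 8$ for $\CC\PP^2$, so the claim must be restricted to $n\ge 1$, the range the paper uses.
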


\begin{proof}
Denote by $C$ the complex tangent bundle of $\mathbb C\mathbb P^{N}$ as complex manifold.
Thus $W \cong C \oplus \overline{C}$. Furthermore, it is known (e.g., \cite[p.\ 170]{Milnor})
that
\[
	C \oplus \underline{\mathbb{C}} \cong \bigoplus_{i=1}^{N+1} H
\]
hence in $K$ we have $C = (N+1)H -1$ and therefore
\[
	W = C + \overline{C} = (N+1)(H + \overline{H}) -2.
\]
Denote by $\overline{z} = \overline{H}-1$. Then, since $H \cdot \overline{H} = 1$ we have
\[
	1+\overline{z} = \frac{1}{1+z}= \sum_{k=0}^{\infty}(-z)^{k} = \sum_{k=0}^{N}(-z)^{k}
\]
whence
\[
	\overline{z} = \sum_{k=1}^{N}(-z)^{k}.
\]
We compute
\begin{align*}
	W &= (N+1)(H + \overline{H}) -2\\
		&= (N+1)\left(2+\sum_{k=2}^{N}(-z)^{k}\right) -2\\
\end{align*}
and therefore
\[
	\varepsilon(W) \equiv (4n+3)(2^{4n+3}-2)-2 \equiv -8(n+1) \mod 2^{4n+3}
\]
which is never zero, since $n+1 < 2^{4n}$ for $n \geq 1$.
\end{proof}

Now, we turn our attention to the computation of $\varepsilon(F)$. There is no reason that
$F$ splits into a sum of line bundles, but we will show this happens in $K_{2^{N+1}}$.
From there, it will be possible to compute $\varepsilon(F)$.

If $F$ was a sum of line bundles $H^{a} \oplus H^{b}$,
then the sum would have the same Chern classes as $F$, i.e., from Proposition \ref{P:
Chern classes of F} we would obtain
\[
	a +b =1, \quad ab = 2m. 
\]
Thus, these numbers would be the solutions to the quadratic equation
\[
	X^{2} - X + 2m =0 
\]
and therefore
\[
	a,b = \frac{1 \pm \sqrt{1 - 8m}}{2}.
\]
These numbers are in general not integers. In fact, we take this computation only as
motivation for the following considerations modulo some power of $2$. It is well-known
that all positive numbers congruent to $1 \mod 8$ are quadratic residues modulo any power
of $2$, see for instance \cite[Theorem 2]{Dence}. Thus, for every
positive $r \in \mathbb{N}$ there exists $s_{r} \in
\mathbb{Z}$ such that
\[
	s_{r}^{2}  \equiv 1 - 8m \mod 2^{r}.
\]
We set 
\[
	a_{r} =  \frac{1-s_{r}}{2},  \quad b_{r} = \frac{1+s_{r}}{2}.
\]
Then
\begin{equation}\label{eq: approximated chern roots}
a_{r} + b_{r} =1, \quad a_{r}b_{r} = \frac{1-s_{r}^{2}}{4} \equiv 2m \mod 2^{r}.
\end{equation}

\begin{proposition}\label{P: F in K_r}
For $r$ big enough we have 
\[
	F \equiv H^{a_{r}} + H^{b_{r}} \mod 2^{N+1}K. 
\]
\begin{proof}
Set $G_{r} = H^{a_{r}} \oplus H^{b_{r}}$. We start by noticing that there are unique $d_{r,j} \in \mathbb{Z}$ such that
\[
	F - G_{r}  = \sum_{j=0}^{N} d_{r,j} z^{j}\in K.
\]

Next, let $\mathrm{ch} \colon K(\mathbb C\mathbb P^{N}) \to H^{\ast}(\mathbb C\mathbb P^{N};\mathbb{Q})$ be the Chern character of $\mathbb C\mathbb P^{N}$. Since the cohomology of $\mathbb C\mathbb P^{N}$ is
torsion-free, we know that $\mathrm{ch}$ is injective, see \cite[Theorem on p.\
131]{Hirzebruch}. Furthermore we have

\[
	\mathrm{ch}(F) -\mathrm{ch}(G_{r})  = \sum_{j=0}^{N} d_{r,j} x^{j}
\]
for $x = \mathrm{ch}(z) = \mathrm{ch}(H-1) = e^{u}-1$. Note that by the injectivity the
image of $\mathrm{ch}$ is a lattice with basis $\{1, x, \ldots, x^{N}\}$.
Recall that for the complex bundle $F \to X$ of rank $2$ we may write
\[
	\mathrm{ch}(F) = \sum_{j=0}^{\infty}\frac{1}{j!}(\nu_{j}(c_{1}(F), c_{2}(F)))
\]
where the $\nu_{j}$ are the power-sum symmetric polynomials in the respective Chern roots, which by the Newton identities can be rewritten as polynomials in the respective Chern classes. For the difference, we obtain
\[
	\mathrm{ch}(F) - \mathrm{ch}(G_{r}) = \sum_{j=0}^{\infty} 
	\frac{1}{j!}(\nu_{j}(c_{1}(F),c_{2}(F)) - \nu_{j}(c_{1}(G_{r}), c_{2}(G_{r})))
\]
From Equation \eqref{eq: approximated chern roots} we infer $c_{1}(F) = c_{1}(G_{r})$ and
$c_{2}(F) \equiv c_{2}(G_{r}) \mod 2^{r}$. Thus, there is a polynomial $q_{j}$ with
integer coefficients such that
\[
	\mathrm{ch}(F) - \mathrm{ch}(G_{r}) = \sum_{j=0}^{\infty} \frac{1}{j!}(q_{j}(f) -
	q_{j}(g_{r}))u^{j}
\]
where $c_{2}(F) = f \cdot u^{2}$ (with $f=2m$) and $c_{2}(G_{r}) = g_{r} \cdot u^{2}$. Write
\[
	q_{j}(f) - q_{j}(g_{r}) = (f-g_{r}) \cdot h_{j}(f,g_{r}) 
\]
for some polynomial $h_j$ with integer coefficients.
Moreover, we have 
\[
	u = \log(1+x) = \sum_{k=1}^{N}\frac{(-1)^{k-1}}{k} x^{k}
\]
and therefore 
\[
	\mathrm{ch}(F) - \mathrm{ch}(G_{r})  = \sum_{j=0}^{N} (f-g_{r})
	\frac{k_{r,j}}{l_{j}}x^{j}
\]
for some $k_{r, j},l_{j} \in \mathbb{Z}$. Hence $(f-g_{r}) \frac{k_{r,j}}{l_{j}} = d_{r,j}$.
Since the sum is finite there is an $L \in \mathbb{Z}$ independent of $r$ such that $l_{j}
| L$ for all $j$. Hence,
\[
	(f-g_{r})\, |\, d_{r,j} L.
\]
The left hand side is divisible by $2^{r}$, so we may choose some large $r$ such that $2^{N+1}$
divides $d_{r,j}$ for all $j$. Thus
\[
	F - G_{r} \in 2^{N+1} K.
\]
\end{proof}
\end{proposition}

Finally we show

\begin{proposition}
We have $\varepsilon(F)=0$.
\end{proposition}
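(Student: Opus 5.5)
Since $\varepsilon$ is defined on $K_{N+1}=K/2^{N+1}K$, Proposition \ref{P: F in K_r} reduces the claim to a computation on line bundles. Fix $r$ large enough that Proposition \ref{P: F in K_r} applies. Then $F$ and $G_r=H^{a_r}+H^{b_r}$ have the same image in $K_{N+1}$. Because $\varepsilon$ is a ring homomorphism, this gives
\[
\varepsilon(F)=\varepsilon(H^{a_r})+\varepsilon(H^{b_r}).
\]
So it suffices to compute $\varepsilon(H^a)$ for arbitrary $a\in\ZZ$.

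First I check that $a_r,b_r$ are integers, so that $H^{a_r}$ and $H^{b_r}$ make sense. For $r\geq 1$ we have $s_r^2\equiv 1-8m \mod 2^r$. Since $1-8m$ is odd, $s_r^2$ is odd, so $s_r$ is odd. Hence $a_r=(1-s_r)/2$ and $b_r=(1+s_r)/2$ are integers. For negative exponents, $H^{a}$ means $\overline{H}^{\,|a|}$, which is legitimate since $H\cdot\overline H=1$ in $K$.

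Next I evaluate $\varepsilon$ on powers of $H$. We have $H=1+z$, so $\varepsilon(H)=1-2=-1$ in $\ZZ_{2^{N+1}}$. From $H\overline H=1$ and multiplicativity, $\varepsilon(\overline H)=\varepsilon(H)^{-1}=-1$. Therefore $\varepsilon(H^a)=(-1)^a$ for every $a\in\ZZ$.

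Finally I combine these. Since $a_r+b_r=1$, exactly one of $a_r,b_r$ is even and the other is odd. Thus
\[
\varepsilon(F)=(-1)^{a_r}+(-1)^{b_r}=1+(-1)=0 \mod 2^{N+1}.
\]
None of these steps is a real obstacle. The only points needing care are the integrality of $a_r,b_r$ and the fact that $\varepsilon$ respects inverses, both handled above.

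Together with Proposition \ref{P: eps of W is not zero}, this yields the contradiction for $\CC\PP^{4n+2}$. Indeed, \eqref{eq:tensorproddecomp} and multiplicativity give $\varepsilon(W)=\varepsilon(E)\varepsilon(F)=0$, whereas Proposition \ref{P: eps of W is not zero} says $\varepsilon(W)\neq 0$.
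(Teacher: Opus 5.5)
Your proof is correct and follows the paper's argument: you reduce via Proposition~\ref{P: F in K_r} to $\varepsilon(H^{a_r})+\varepsilon(H^{b_r})=(-1)^{a_r}+(-1)^{b_r}$, and this vanishes because $a_r+b_r=1$. You also make explicit two points the paper leaves implicit: that $a_r,b_r$ are integers because $s_r$ is odd, and that $\varepsilon(\overline H)=-1$ follows from $H\overline H=1$, which handles negative exponents.
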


\begin{proof}
From Proposition \ref{P: F in K_r}  we have in $K_{N+1}$
\[
	\varepsilon(F) = \varepsilon(H^{a_{r}}) + \varepsilon(H^{b_{r}}) = (-1)^{a_{r}} +
	(-1)^{b_{r}}
\]
since $z = H-1$. The relation $a_{r}+b_{r} = 1$ shows that $a_{r}$ and $b_{r}$ must have
different parity mod $2$ and therefore the right-hand side is equal to zero.
\end{proof}

As discussed above, since $\varepsilon$ is a ring homomorphism, we obtain a contradiction with Proposition
\ref{P: eps of W is not zero} via
\[
	0 \neq \varepsilon(W)  = \varepsilon(E)\varepsilon(F) = 0.
\]
Thus the assumption that $\mathbb C\mathbb P^{N}$ possesses an almost quaternionic structure must have been false.

\section{A remark: Complex projective spaces via Chern classes}\label{rem:remark}

Although we already proved our main theorem, let us revisit the case of $M:=\CC \PP^{2m}$ using our approach via Chern classes. We will show that in all cases except when $2m$ is of the form $2m=2^n-2$, our recursion formulas for the Chern classes of the horizontal bundle $H$ over the twistor space $Z$ give a contradiction to their integrality.

The cohomology ring of $\mathbb C\mathbb P^{2m}$ is given by
$H^{\ast}(\mathbb C\mathbb P^{2m}) = \mathbb{Z}[u]/(u^{2m+1})$ for a generator $u \in H^{2}(\mathbb
C\mathbb P^{2m}) \cong \mathbb{Z}$. As in the case of the spheres and the Cayley plane, the assumptions for the Leray-Hirsch theorem are satisfied, in order to compute the cohomology of $Z$.

Using Lemma \ref{lem:QV} we have
\begin{equation}\label{eq:p1QV}
	\pi^* p_1(Q) = p_1(\pi^*Q) = p_1(V) = c_1^2(V).
\end{equation}
 The Pontryagin classes of complex projective space are given by
\begin{equation}\label{eq:pontryaginCPn}
	p_s(M) = {2m+1 \choose s}u^{2s}
\end{equation}
(see \cite[Example 15.6]{Milnor}), so in particular $p_1(M) = (2m+1)u^2$, hence from Lemmas \ref{lem:identitiesH} and \ref{lem:pontryaginchern} as well as Proposition \ref{prop:c1} we get
\begin{equation}
	(2m+1)\pi^*u^2 =  p_1(H) = c_1(H)^2-2c_2(H) = m^2c_1(V)^2 - 2c_2(H).
\end{equation}
Hence, if $m$ is even, then we obtain a contradiction, as $(2m+1)\pi^*u^2$ is not divisible by $2$ in $H^*(Z)$. (Of course we knew already that $m$ cannot be even by Corollary \ref{Cor: AQ in dim 8n are spin}, respectively by \cite{marchiafava-romani}.)

The same argument implies that $c_1(V)^2$, and hence also $c_1(V)$, is not divisible by $2$ in $H^2(Z)$. Moreover $V$, when restricted to a fiber, is the tangent bundle of $S^2$; it follows that we may choose $x\in H^2(Z)$ that restricts to a generator of $H^2(S^2;\ZZ)$ such that
\[
c_1(V) = 2x+\pi^*u.
\] We compute $4x^2 + 4x\pi^*u  + \pi^*u^2= c_1(V)^2 = \pi^*p_1(Q)$ using \eqref{eq:p1QV}. It follows that there is $k\in \ZZ$ such that 
\begin{equation}\label{eq:p1ink}
	p_1(Q) = (4k+1)u^2
\end{equation}
and
\begin{equation}\label{eq:cohomringCPn}
	H^*(Z) \cong \ZZ[u,x]/(u^{2n+1},x^2+xu-ku^2).
\end{equation}

Let us now assume that the even number $2m$ is not of the form $2^n-2$ for any $n$. Consider the binary expansion of $2m$: its least significant bit is $0$, and the remainder of the expansion does not consist entirely of $1$'s. By looking at the next appearing $0$, we see that we find natural numbers $l\geq 2$ and $r\geq 1$ such that $2m = 2^lr+2^{l-1}-2$ (the summand $2^{l-1}-2$ coming from the $1$'s following the least significant bit $0$).

Consider the $(2^{l-1}-1)$st Chern class of $M$. Lemma \ref{lem:pontryaginchern} gives
\begin{equation}\label{eq:pontrchern1}
	p_{2^{l-1}-1}(H) \equiv c_{2^{l-1}-1}(H)^2 \quad \mod 2.
\end{equation}
By \eqref{eq:pontryaginCPn} we have 
\[
p_{2^{l-1}-1}(H) = {2^lr+2^{l-1}-1 \choose 2^{l-1}-1} \pi^*u^{2^l-2}.
\]
Recall that it follows from Kummer's theorem \cite{Kummer} that the maximal exponent of $2$ dividing a binomial coefficient ${a \choose b}$ equals the number of carries that occur when adding $b$ and $a-b$ in base $2$. For the binomial coefficient above, adding $2^{l-1}-1$ to $2^lr$ does not produce any carry, so that $p_{2^{l-1}-1}(H)$ is not divisible by $2$. On the other hand,  Equation \eqref{eq:oddck} gives
\[
2c_{2^{l-1}-1}(H) = \sum_{j=0}^{2^{l-1}-2} (-1)^j {2^lr+2^{l-1}-2-j \choose 2^{l-1}-1-j} c_j(H)c_1(V)^{2^{l-1}-1-j}
\]
This time, for any $j$, adding $2^{l-1}-1-j$ to $2^l r-1$ in base $2$ produces at least two carries (as $l\geq 2$), so that all binomial coefficients in the above sum are divisible by $4$. It follows that $c_{2^{l-1}-1}(H)$ is divisible by $2$. As we saw above that $p_{2^{l-1}-1}(H)$ is not divisible by $2$, this contradicts Equation \eqref{eq:pontrchern1}.

Using this approach, the cases  $\CC \PP^{2^n-2}$ are not covered. 	We believe it should be possible to exclude the existence of almost quaternionic structures on $\CC \PP^{2^n-2}$ using these techniques, via the equality between the top Chern class and the Euler class as for spheres and the Cayley plane above. We were, however, unable to determine the relevant top Chern classes in a more tractable way than the recursive formulas in order to attack this question in full generality; only for $n=3$ and $n=4$ we can solve it. To this end, we first compute recursively the Chern classes of $H$, as a polynomial in the variable $k$ (see Equation \eqref{eq:p1ink}) using Lemma \ref{lem:pontryaginchern} for the even ones and Equation \eqref{eq:oddck} for the odd ones, and the description of the cohomology ring of $Z$ in \eqref{eq:cohomringCPn}. For $\CC \PP^{2^3-2}=\CC \PP^6$ we obtain
\begin{align*}
%	c_1(H) &= 3c_1(V)\\
%	c_2(H) &= (18k+1)u^2\\
%	c_3(H) &= (16k-3)u^2c_1(V)\\
%	c_4(H) &= (30k^2-6k+1)u^4\\
%	c_5(H) &= (6h^2-4k+3)u^4c_1(V)\\
	c_6(H) &= (44k^3-16k^2-18k-5) u^6.
\end{align*}
This is a contradiction because there is no integer $k$ that solves the equation
\[
44k^3-16k^2-18k-5 = \chi(\CC \PP^6) = 7.
\]
For $\CC \PP^{2^4-2} = \CC \PP^{14}$ we obtain (with the help of a Python script):
\begin{align*}
%	c_1(H) &= 7c_1(V)\\
%	c_2(H) &=( 98k + 17) u^2\\
%	c_3(H) &= (224k + 11)u^2 c_1(V)\\
%	c_4(H) &= (1470k^2 + 210k - 15) u^4\\
%	c_5(H) &= (1806k^2 - 72 k - 9) u^4 c_1(V)\\
%	c_6(H) &= (6860 k^3 - 150k + 25) u^6\\
%	c_7(H) &= (5216k^3 - 1188k^2 + 48k - 11) u^6c_1(V)\\
%	c_8(H) &= (11494k^4 - 2576 k^3 + 234k^2 + 74k - 31) u^8\\
%	c_9(H) &= (2002k^4 - 2648k^3 + 2202k^2 + 492k + 87) u^8 c_1(V)\\
%	c_{10}(H) &= (42252k^5 + 450k^4 - 15300k^3 - 8420k^2 - 1450k - 71)u^{10}\\
%	c_{11}(H) &= (97440k^5 + 14310k^4 - 43560 k^3 - 24700 k^2 \\ & \qquad\qquad\qquad - 4820 k - 373)u^{10}c_1(V)\\
%	c_{12}(H) &= (-2364180k^6 - 806940k^5 + 1036020k^4 \\ & \qquad \qquad \qquad + 797280k^3 + 222510k^2 + 28278k + 1377) u^{12}\\
%	c_{13}(H) &= (-2567460k^6 - 891360k^5 + 1120260k^4 \\ & \qquad \qquad \qquad  + 873160k^3 + 246210k^2 + 31764k + 1591)u^{12}c_1(V)\\
	c_{14}(H) &= (175023480k^7 + 96140280k^6 - 64359000k^5 - 75024340k^4  \\ & \qquad \qquad \qquad - 28685020k^3 - 5479680k^2 - 529210k - 20605) u^{14}.
\end{align*}
Again, for no integer $k$ this polynomial takes the value $\chi(\CC \PP^{14}) =15$.


\begin{thebibliography}{99}
\bibitem{Alekseevsky} Dmitri V.\ Alekseevskii, \emph{Riemannian spaces with exceptional holonomy groups}, Funct.\ Anal.\ Appl.\ {\bf 2} (1968), no.\ 2, 97--105.
\bibitem{AMP} Dmitri V.\ Alekseevsky, Stefano Marchiafava and Massimiliano Pontecorvo, \emph{Compatible complex structures on almost quaternionic manifolds}, Trans.\ Amer.\ Math.\ Soc.\ {\bf 351} (1999), no.\ 3, 997--1014.
\bibitem{BorelHirzebruch} Armand Borel and Friedrich Hirzebruch, \emph{Characteristic classes and homogeneous spaces. I}, Amer.\ J.\ Math.\ {\bf 80} (1958), 458--538.
\bibitem{CadekVanzura} Martin Čadek and Jiří Vanžura, \emph{Almost quaternionic structures on eight-manifolds}, Osaka J.\ Math.\ {\bf 35} (1998), no.\ 1, 165--190.
\bibitem{Dence} Joseph B.\ Dence and Thomas P.\ Dence, \emph{Residues. II: Congruences modulo powers of $2$}, Missouri J.\ Math.\ Sci.\ {\bf 8} (1996), No.\ 1, 26--35.
\bibitem{Donaldson} Simon K.\ Donaldson, \emph{Floer homology groups in Yang-Mills theory}, Cambridge Tracts in Mathematics. 147. Cambridge: Cambridge University Press, 2002.
\bibitem{Hirzebruch} Friedrich Hirzebruch, \emph{A Riemann-Roch theorem for differentiable manifolds}, Semin.\ Bourbaki {\bf 11} (1958/59), Exp.\ No.\ 177, 21 p.
\bibitem{Huybrechts} Daniel Huybrechts, \emph{Complex geometry. An introduction}. Universitext, Springer (2005).
\bibitem{kobayashi} Shoshichi Kobayashi, \emph{Remarks on Complex Contact Manifolds}, Proc.\ Amer.\ Math.\ Soc.\ {\bf 10} (1959), No.\ 1, 164--167.
\bibitem{Kummer} Ernst E.\ Kummer, \emph{{\"U}ber die {Erg{\"a}nzungss{\"a}tze} zu den allgemeinen {Reciprocit{\"a}tsgesetzen}}, J.\ Reine Angew.\ Math.\ {\bf 44} (1852), 93--146.
\bibitem{marchiafava-romani}
Stafano Marchiafava and Giuliano Romani, \emph{Classi caratteristiche dei fibrati quaternionali generalizzati}, Atti Accad.\ Naz.\ Lincei, VIII. Ser., Rend., Cl.\ Sci.\ Fis.\ Mat.\ Nat.\ {\bf 56} (1974), 899--906.
\bibitem{Cabrera} Francisco Martín Cabrera, \emph{Almost quaternion-Hermitian manifolds}, Ann.\ Global Anal.\ Geom.\ {\bf 25} (2004), No.\ 3, 277--301.
\bibitem{Lang} Serge Lang, \emph{Algebra. 3rd revised ed.}, Graduate Texts in Mathematics {\bf 211}. New York, NY: Springer, 2002.
\bibitem{LeBrunSalamon} Claude LeBrun and Simon Salamon, \emph{Strong rigidity of positive quaternion-Kähler manifolds}, Invent.\ Math.\ {\bf 118} (1994), No.\ 1, 109--132.
\bibitem{MR1702278} Jon Peter May, \emph{A concise course in algebraic topology}, Chicago Lectures in Mathematics, University of Chicago Press (1999).
\bibitem{Milnor} John W.\ Milnor and James D.\ Stasheff, \emph{Characteristic classes}. Ann. of Math. Stud., No. 76
Princeton University Press, Princeton, NJ; University of Tokyo Press, Tokyo, 1974.
\bibitem{overflowmoroianu} Andrei Moroianu, thread on mathoverflow.net,  \url{https://mathoverflow.net/questions/52396/are-there-topological-obstructions-to-the-existence-of-almost-quaternionic-struc}, accessed on April 20, 2026. 
\bibitem{MoroianuPilcaSemmelmann} Andrei Moroianu, Mihaela Pilca and Uwe Semmelmann, \emph{Homogeneous almost quaternion-Hermitian manifolds}, Math.\ Ann.\ {\bf 357} (2013), No.\ 4, 1205--1216.
\bibitem{Salamon} Simon Salamon, \emph{Quaternionic Kähler manifolds}, Invent.\ Math.\ {\bf 67} (1982), 143--171.
\bibitem{Sutherland} Wilson A.\ Sutherland, \emph{A note on almost complex and weakly complex structures}, J.\ Lond.\ Math.\ Soc.\ {\bf 40} (1965), 705--712.
\end{thebibliography}
\end{document}